\newtheorem{thm}{Theorem}[section]
\newtheorem*{thm*}{Theorem}
\newtheorem{lem}[thm]{Lemma}
\newtheorem{prop}[thm]{Proposition}
\newtheorem*{prop*}{Proposition}
\newtheorem{cor}[thm]{Corollary}
\theoremstyle{definition}
\newtheorem{defn}[thm]{Definition}
\newtheorem{notation}[thm]{Notation}
\newtheorem{remark}[thm]{Remark}
\newtheorem{question}[thm]{Question}
\def\e{\epsilon}
\def\al{\alpha}
\def\bb{\mathbb}
\def\de{\delta}
\def\Sg{\Sigma}
\def\bb{\mathbb}
\def\g{\gamma}
\def\G{\Gamma}
\def\cc{\mathcal}
\def\t{\mathsf{t}}
\DeclareMathOperator{\id}{id}
\DeclareMathOperator{\tr}{tr}
\DeclareMathOperator{\ch}{Ch}
\newcommand\ip[2]{\left\langle #1\, , #2 \right\rangle}
\begin{document}


\title{Remarks on the Grothendieck Norm}

\author[Sinclair]{Thomas Sinclair}
\author[Vivek]{Naveen Vivek}

\address{Mathematics Department, Purdue University, 150 N. University Street, West Lafayette, IN 47907-2067}
\email{vivek@purdue.edu}
\email{tsincla@purdue.edu}
\urladdr{http://www.math.purdue.edu/~tsincla/}


\begin{abstract}
    The goal of this short note is to point out three observations around the Grothendieck norm and semidefinite programming. The first is that the Grothendieck norm captures the difficulty of relating the off-diagonal entries of a real, symmetric matrix to a probabilistic correlation, the second is that there is an interesting ``Fourier''-type duality between the Schur and Grothendieck norms of a real matrix, and the third and last centers around the difficulty of finding an efficiently computable noncommutative analog to the Grothendieck norm.
\end{abstract}

\setcounter{tocdepth}{1}
\maketitle


\section*{Introduction}

In recent years the Schur norm $\g_2(A)$ of a matrix and its dual norm $\g_2^*(A)$, which we will refer to as the Grothendieck norm, have seen remarkable applications to combinatorial theory.
The essential reason for the interest in these norms is that they are computationally efficient to calculate via semidefinite programming, yet provide very good bounds for several computationally hard combinatorial norms thanks to Grothendieck's inequality.
We refer the reader to \cite{AlonNaor2004, ConlonZhao2017,KhotNaor,Matousek2020, Linial2007} for a non-exhaustive selection of such results, to the paper \cite{Pisier-grothendieck} for a thorough treatment of Grothendieck's Inequality for the $\g_2^*$-norm, and to \cite{Gartner2012, VandenbergheBoyd-sdp} for an introduction to semidefinite programming.

The goal of this short note is to point out three observations around the Grothendieck norm and semidefinite programming. The first is that the $\g_2^*$-norm captures the difficulty of relating the off-diagonal entries of a real, symmetric matrix to a probabilistic correlation, the second is that there is an interesting ``Fourier''-type duality between the Schur and Grothendieck norms of a real matrix, and the third and last centers around the difficulty of finding an efficiently computable noncommutative analog to the $\g_2^*$-norm. The literature here is vast, spanning the field of combinatorics, probability, functional analysis, and the quantum information theory, so the authors are hesitant to make any claims on originality. Nonetheless, we hope that collecting these thoughts together will serve to help publicize the interconnections of these fields and that the questions we pose may be a good starting point for future research.

\section{Preliminaries}

Let $A\in M_{n,k}$ be an $n\times k$ real or complex matrix. Throughout $\|A\|$ will denote the \emph{operator norm} of $A$, i.e., 
\begin{equation}
    \|A\| = \sup_{|x|_2\leq 1} |Ax|_2.
\end{equation}
It is well-known that $\|A\|$ is the largest singular value of $A$.

For each $1\leq p,q\leq \infty$ we have a matrix norm
\begin{equation}
    \|A\|_{p\to q} := \sup_{|x|_p\leq 1} |Ax|_q
\end{equation}
with $\|A\|_{2\to 2}$ coinciding with the operator norm. Note that $\|A\|_{p\to r}\leq \|A\|_{p\to q}\|A\|_{q\to r}$ and that
\begin{equation}
    \|A^*\|_{p\to q} = \|A\|_{q^*\to p^*}
\end{equation}
where $p^* = (1 - 1/p)^{-1}$ and $q^* = (1 - 1/q)^{-1}$.
It is easy to compute that 
\begin{equation}
    \begin{aligned}
        \|A\|_{1\to\infty} =& \max_{i,j} |A_{ij}|\\
         \|A\|_{\infty\to 1} =& \max_{\e_i,\eta_j\in \{\pm 1\}}\left|\sum_{i,j} A_{ij}\e_i\eta_j\right|.
    \end{aligned}
\end{equation}

\begin{defn}
    For each $1\leq p\leq \infty$ we define the \emph{Schatten $p$-norm} of a matrix $A$ by
    \begin{equation*}
        \|A\|_{S,p} := \tr(|A|^p)^{1/p}
    \end{equation*}
where $|A| = (A^* A)^{1/2}$.
\end{defn} 
Note that $\|A\|_{S,1} = \sum_{k=1}^n s_k(A)$, where $s_1(A),\dotsc, s_n(A)$ are the singular values of $A$ with multiplicity, and that $\|A\|_{S,\infty} = \max_k s_k(A) = \|A\|$, the operator norm. Further $\|A\|_{S,2} = \tr(A^* A)^{1/2}$ is just the Hilbert--Schmidt norm.

\begin{defn} 
    We define 
\begin{equation}
    \begin{aligned}
        \g_2(A) :=& \inf_{A = BC} \|B\|_{2\to\infty} \|C\|_{1\to 2}\\
                =& \inf_{A = B^* C} \|B\|_{1\to 2}\|C\|_{1\to 2}\\
                =& \inf_{A = B^* C} \|B^* B\|_{1\to \infty}^{1/2}\|C^* C\|_{1\to \infty}^{1/2}
    \end{aligned}
\end{equation}
where $B$ and $C$ are arbitrary rectangular matrices. The third line follows from the second by noting that
\[\|A\|_{1\to 2} = \max_i |A_i|_2\] where $A_i$ is the $i$-th column vector of $A$. From this we also see that the second line is equivalent to
\begin{equation}\label{eq:schur-norm-defn}
    \g_2(A) = \inf_{x_i,y_j\in\ell_2}\max_{i,j} \left\{|x_i|_2\cdot |y_j|_2 : A_{ij} = \ip{x_i}{y_j}\right\}
\end{equation}
which is often referred to as the \emph{Schur norm} of $A$. The $\g_2$-norm along with these various descriptions of it can be traced back to the seminal work of Grothendieck \cite{Grothendieck}.
\end{defn}

It is well-known and seems to have been rediscovered multiple times, see \cite[Section 2.3.2]{Lee2007}, \cite[Theorem 3.1]{Mathias1993}, \cite[Exercise 8.8(v)]{Paulsen2002}, or \cite[Section 2.6.2]{Parrilo2013}, that the last equation translates to the following semidefinite program which computes $\g_2(A)$:
\begin{equation}\label{eq:gamma2-sdp}
    \begin{aligned}
        & \text{minimize} && t\\
        & \text{subject to} && \begin{pmatrix} X & A\\ A^* & Y\end{pmatrix}\succeq 0\\
        & && X_{ii} = Y_{jj} = t.
    \end{aligned}
\end{equation}



From (\ref{eq:schur-norm-defn}) we see that the dual norm $\g_2^*$ to $\g_2$ is given by: 
\begin{equation}
    \g_2^*(A) := \sup_{|x_i|_2 = |y_j|=1} \left|\sum_{i,j} A_{ij}\ip{x_i}{y_j}\right|
\end{equation}
where $x_1,\dotsc,x_n,y_1,\dotsc,y_k\in \ell_2$, the space a square-summable sequences. Grothendieck's inequality \cite[Chapter 5]{PisierSim} shows that, in the real case, there is a constant $K_G\in (1.5, 1.8)$ so that 
\begin{equation}
    \|A\|_{\infty\to 1}\leq \g_2^*(A) \leq K_G\|A\|_{\infty\to 1}.
\end{equation}
(Note that this implies that the same holds in the complex case with constant at most $2K_G$.) For this reason, $\g_2^*$ will be referred to as the \emph{Grothendieck norm}.

As observed in, for instance, \cite{Lee2008}, \cite[Corollary 4.3]{Mathias1993}, and \cite[Section 5]{Tropp2009}, there is a semidefinite programming characterization of the Grothendieck norm $\g_2^*$; namely, $\g_2^*(A)$ is captured by the following program:
\begin{equation}\label{eq:g2dual-sdp}
    \begin{aligned}
        &\text{minimize} &&\tr(X + Y)/2\\
        &\text{subject to} && \begin{pmatrix} X & A\\ A^* & Y \end{pmatrix}\succeq 0\\
        & &&X,Y\ \text{diagonal}.
    \end{aligned}
\end{equation}

As a consequence of this, one obtains

\begin{prop}[Theorem 11, \cite{Lee2008}]\label{prop:lee11}
      For $A\in M_{n,k}$, we have that $\g^* = \g_2^*(A)$ is the minimal value so that there exist orthogonal families of vectors $\{x_1,\dots,x_n\}$ and $\{y_1,\dotsc,y_k\}$ in $\bb C^{n+k}$ with 
    \begin{equation*}
        \g^* = \sum_{i=1}^n |x_i|_2^2 = \sum_{i=1}^n |y_i|_2^2
    \end{equation*}
    so that $A_{ij} = \ip{x_i}{y_j}$.
\end{prop}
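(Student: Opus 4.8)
The plan is to read the statement directly off the semidefinite program (\ref{eq:g2dual-sdp}) via a Gram-matrix factorization. Recall that an $(n+k)\times(n+k)$ Hermitian matrix $M$ is positive semidefinite if and only if it is the Gram matrix of some $n+k$ vectors in $\bb C^{n+k}$ (take $\rk M$ of them and pad with zeros). Writing $M = \begin{pmatrix} X & A \\ A^* & Y \end{pmatrix}$ and naming the first $n$ of these vectors $x_1,\dots,x_n$ and the last $k$ of them $y_1,\dots,y_k$, the three blocks record $X_{ii'} = \ip{x_i}{x_{i'}}$, $Y_{jj'} = \ip{y_j}{y_{j'}}$, and $A_{ij} = \ip{x_i}{y_j}$. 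Hence the constraint that $X$ and $Y$ be diagonal is \emph{exactly} the requirement that $\{x_1,\dots,x_n\}$ and $\{y_1,\dots,y_k\}$ each be an orthogonal family, in which case $X_{ii} = |x_i|_2^2$ and $Y_{jj} = |y_j|_2^2$. So (\ref{eq:g2dual-sdp}) asserts that
\begin{equation*}
    \g_2^*(A) \;=\; \min\ \tfrac12\Big(\textstyle\sum_{i=1}^n |x_i|_2^2 + \sum_{j=1}^k |y_j|_2^2\Big),
\end{equation*}
the minimum being over all pairs of orthogonal families in $\bb C^{n+k}$ with $A_{ij} = \ip{x_i}{y_j}$; the minimum is attained because, the case $A=0$ being trivial, this feasible set is nonempty and the sublevel sets of the objective on it are compact.

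It then remains to replace the averaged quantity by the balanced one in the statement. Given a feasible pair with $\sum_i |x_i|_2^2 = a$ and $\sum_j |y_j|_2^2 = b$ (both positive unless $A = 0$), the substitution $x_i \mapsto \la x_i$, $y_j \mapsto \la^{-1} y_j$ with $\la>0$ keeps each family orthogonal and preserves $A_{ij} = \ip{\la x_i}{\la^{-1} y_j}$, while rescaling the two sums to $\la^2 a$ and $\la^{-2} b$. Choosing $\la^2 = \sqrt{b/a}$ equalizes them at the common value $\sqrt{ab}$, and $\sqrt{ab} \leq \tfrac12(a+b)$ by the AM--GM inequality, with equality if and only if $a=b$. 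It follows that at an optimal pair one necessarily has $a=b$, with common value $\g_2^*(A)$; such a pair witnesses the inequality ``$\leq$'' of the proposition. For ``$\geq$'', any pair of orthogonal families in $\bb C^{n+k}$ with $\sum_i |x_i|_2^2 = \sum_j |y_j|_2^2 = t$ and $A_{ij} = \ip{x_i}{y_j}$ is feasible for the program above with objective value $\tfrac12(t+t) = t$, whence $t \geq \g_2^*(A)$. Together these identify the minimal admissible $t$ with $\g_2^*(A)$.

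The argument is essentially formal once the dictionary ``diagonal block $\leftrightarrow$ orthogonal family'' is in place; the only things that need watching are bookkeeping points — that the Gram factorization of the $(n+k)\times(n+k)$ matrix $M$, the rescaling, and the rebalancing all keep the vectors inside $\bb C^{n+k}$ (none of these operations enlarges the ambient dimension), and that the extremum is genuinely attained (if preferred, attainment can simply be quoted from strong duality for the SDP (\ref{eq:g2dual-sdp})). There is no real obstacle here: the substance of the proposition is precisely the observation that the diagonal constraint in (\ref{eq:g2dual-sdp}) is an orthogonality constraint.
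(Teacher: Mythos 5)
Your proof is correct and follows essentially the same route as the paper, which states Proposition \ref{prop:lee11} precisely as a consequence of the semidefinite program (\ref{eq:g2dual-sdp}) (citing Lee); your write-up simply supplies the Gram-factorization dictionary between diagonal blocks and orthogonal families, plus the AM--GM rebalancing, which is the intended (and cited) derivation.
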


%
%

\begin{cor}\label{cor:grothendieck-decomp}
    For $A\in M_n$ we have that $\g_2^*(A)$ is the infimum of values of $|\al|_2|\beta|_2$ over all $\alpha, \beta\in \bb R^n$ such that there is a contraction $X\in M_n$ such that $A_{ij} = \alpha_i\beta_i X_{ij}$.
\end{cor}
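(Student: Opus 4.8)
The plan is to prove the two inequalities $\g_2^*(A)\ge I(A)$ and $\g_2^*(A)\le I(A)$, where $I(A)$ denotes the quantity in the statement — the infimum of $|\al|_2|\beta|_2$ over all $\al,\beta\in\bb R^n$ admitting a contraction $X\in M_n$ with $A_{ij}=\al_i\beta_j X_{ij}$. (The subscript on $\beta$ should read $j$, not $i$: with $\al_i\beta_i X_{ij}$ the claimed infimum is in general strictly larger than $\g_2^*(A)$, already for some $2\times 2$ matrices.)

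For $\g_2^*(A)\ge I(A)$ I would simply unpack Proposition \ref{prop:lee11}. It supplies orthogonal families $\{x_1,\dots,x_n\}$ and $\{y_1,\dots,y_n\}$ in a Euclidean space — which, since $A$ is real, we may take to be a real Euclidean space by replacing each vector by the pair of its real and imaginary parts, an operation that preserves orthogonality and Euclidean norms because the inner products $\ip{x_i}{y_j}=A_{ij}$ are real — with $A_{ij}=\ip{x_i}{y_j}$ and $\sum_i|x_i|_2^2=\sum_j|y_j|_2^2=\g_2^*(A)$. Set $\al_i=|x_i|_2$, $\beta_j=|y_j|_2$, $\hat x_i=x_i/|x_i|_2$, $\hat y_j=y_j/|y_j|_2$, and $X_{ij}=\ip{\hat x_i}{\hat y_j}$ (if some $x_i$ or $y_j$ is zero the corresponding row or column of $A$ vanishes and we set those entries of $X$ to $0$). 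Then $A_{ij}=\al_i\beta_j X_{ij}$, and writing $X=\widehat{U}^{*}\widehat{V}$ with $\widehat{U},\widehat{V}$ the matrices whose columns are the orthonormal systems $\{\hat x_i\}$, $\{\hat y_j\}$ we get $\|X\|\le\|\widehat{U}^{*}\|\,\|\widehat{V}\|\le 1$, so $X$ is a contraction; finally $|\al|_2|\beta|_2=\bigl(\sum_i|x_i|_2^2\bigr)^{1/2}\bigl(\sum_j|y_j|_2^2\bigr)^{1/2}=\g_2^*(A)$. Hence $I(A)\le\g_2^*(A)$, and the infimum is attained.

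For $\g_2^*(A)\le I(A)$ I would fix any admissible triple $\al,\beta,X$ and bound $\g_2^*(A)$ from its dual description $\g_2^*(A)=\sup\bigl|\sum_{i,j}A_{ij}\ip{u_i}{v_j}\bigr|$, the supremum over unit vectors $u_i,v_j\in\ell_2$. For such vectors set $p=\sum_i e_i\otimes\al_i u_i$ and $q=\sum_j e_j\otimes\beta_j v_j$ in $\bb R^n\otimes\ell_2$; since the $\al_i,\beta_j$ are real, expanding the inner product and using $A_{ij}=\al_i\beta_j X_{ij}$ gives
\[
\sum_{i,j}A_{ij}\ip{u_i}{v_j}=\sum_{i,j}X_{ij}\ip{\al_i u_i}{\beta_j v_j}=\ip{p}{(X\otimes\id)\,q},
\]
so that $\bigl|\sum_{i,j}A_{ij}\ip{u_i}{v_j}\bigr|\le\|X\otimes\id\|\,|p|_2\,|q|_2=\|X\|\,|p|_2\,|q|_2$. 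Because the $e_i$ are orthonormal and $|u_i|_2=|v_j|_2=1$ we have $|p|_2^2=\sum_i\al_i^2=|\al|_2^2$ and $|q|_2^2=|\beta|_2^2$, whence the quantity is at most $\|X\|\,|\al|_2|\beta|_2\le|\al|_2|\beta|_2$. Taking the supremum over $u_i,v_j$ and then the infimum over admissible $(\al,\beta,X)$ yields $\g_2^*(A)\le I(A)$, which together with the previous step proves the statement.

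The one substantive ingredient is the bound used in the second step: the tensor identity $\sum_{i,j}X_{ij}\ip{p_i}{q_j}=\ip{\sum_i e_i\otimes p_i}{(X\otimes\id)\sum_j e_j\otimes q_j}$ for arbitrary vectors $p_i,q_j$ in a Hilbert space, combined with $\|X\otimes\id\|=\|X\|$ — the standard fact that a contraction on $\ell_2^n$ remains a contraction after ampliation. Everything else is bookkeeping: real-ifying the vectors from Proposition \ref{prop:lee11}, the scaling identity $|\al|_2|\beta|_2=\g_2^*(A)$, and the degenerate case of a zero vector. I expect the only place a hasty argument can go wrong is the normalization $|u_i|_2=|v_j|_2=1$, which is exactly what makes $|p|_2^2=\sum_i\al_i^2|u_i|_2^2$ collapse to $|\al|_2^2$.
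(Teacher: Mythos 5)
Your proof is correct, and for the inequality the paper actually writes out it is the same argument: unpack Proposition \ref{prop:lee11}, set $\al_i=|x_i|_2$, $\beta_j=|y_j|_2$, normalize, and observe that the Gram-type matrix $X_{ij}=\ip{\hat x_i}{\hat y_j}$ built from two orthonormal (or zero) systems is a contraction --- the paper compresses this into ``apparent by singular value decomposition,'' and your factorization $X=\widehat U^*\widehat V$ is exactly the justification. You are also right that the statement (and the paper's proof) should read $\al_i\beta_j$ rather than $\al_i\beta_i$; your parenthetical that the typo'd quantity can genuinely differ is accurate (e.g.\ for $A=E_{11}+E_{21}$ the row-scaled infimum is $2\sqrt2$ while $\g_2^*(A)=2$). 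Where you go beyond the paper is the reverse inequality $\g_2^*(A)\le|\al|_2|\beta|_2$: the paper's proof of the corollary does not address it at all, leaving it implicit. It follows in one line from the SDP characterization (\ref{eq:g2dual-sdp}), since for $A=D_\al X D_\beta$ with $\|X\|\le1$ the block matrix $\begin{pmatrix} tD_\al^2 & A\\ A^* & t^{-1}D_\beta^2\end{pmatrix}$ is positive semidefinite, giving value $|\al|_2|\beta|_2$ after optimizing $t$; alternatively it is in effect reproved in the converse half of Proposition \ref{prop:gr-norm-product} via Schur products of Gram decompositions. Your route --- the dual supremum formula for $\g_2^*$ together with the tensor identity and $\|X\otimes\id\|=\|X\|$ --- is a clean direct version of that same tensor/ampliation idea, and your bookkeeping (realification of the vectors from Proposition \ref{prop:lee11}, the zero-vector case, $|p|_2=|\al|_2$ from the normalization $|u_i|_2=1$) is all sound. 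So: same approach where the paper gives one, plus a correct self-contained treatment of the direction the paper omits.
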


\begin{proof}
    By Proposition \ref{prop:lee11}, we have that $\g_2^*(A)$ is the infimum of the values of 
    \[\left(\sum_{i=1}^n |x_i|^2\right)^{1/2}\left(\sum_{i=1}^n |y_i|^2\right)^{1/2}\]
    where $\{x_1,\dotsc, x_n\}$ and $\{y_1,\dotsc,y_n\}$ are orthogonal families of vectors so that $A_{ij} \equiv \ip{x_i}{y_j}$. Let $\alpha_i = |x_i|$ and $\beta_j = |y_j|$ for $i,j=1,\dotsc,n$, and let $x_i' = x_i/\alpha_i$ and $y_j' = y_j/\beta_j$, setting $x_i'$ or $y_j'$ to $0$ if the expression is undefined. We have that $A_{ij} = \alpha_i\beta_i B_{ij}$, where it is apparent by singular value decomposition that $X_{ij} := \ip{x_i'}{y_j'}$ is a contraction. \qedhere
\end{proof}

\begin{notation}
    Given $A,B\in M_n$, we define the \emph{Schur product} $A\circ B$ by 
    \begin{equation*}
        (A\circ B)_{ij} = A_{ij}B_{ij}.
    \end{equation*}
\end{notation}

\begin{prop}\label{prop:gr-norm-product}
    For $A\in M_n$, we have that
    \begin{equation}\label{eq:schur-decomp}
        \g_2^*(A) = \inf\{ \|B\|_{S,1}\|C\|_{S,\infty} : A = B\circ C\}
    \end{equation}
\end{prop}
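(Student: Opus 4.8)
The plan is to establish separately the two inequalities that together constitute \eqref{eq:schur-decomp}, using Corollary~\ref{cor:grothendieck-decomp} to supply the rank-one building blocks and the singular value decomposition to pass between rank-one and general $B$. Throughout we may assume $A\ne 0$, since otherwise both sides of \eqref{eq:schur-decomp} vanish.

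\emph{The inequality $\inf\{\|B\|_{S,1}\|C\|_{S,\infty}:A=B\circ C\}\le\g_2^*(A)$.} Given $\e>0$, Corollary~\ref{cor:grothendieck-decomp} produces $\alpha,\beta\in\bR^n$ and a contraction $X\in M_n$ with $A_{ij}=\alpha_i\beta_j X_{ij}$ and $|\alpha|_2|\beta|_2<\g_2^*(A)+\e$. Then $B:=\alpha\beta^{*}$ and $C:=X$ form a Schur factorization $A=B\circ C$ in which $B$ has rank one, so $\|B\|_{S,1}=|\alpha|_2|\beta|_2$, while $\|C\|_{S,\infty}=\|X\|\le 1$. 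Thus the infimum on the left is at most $|\alpha|_2|\beta|_2<\g_2^*(A)+\e$, and letting $\e\to0$ gives the claim.

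\emph{The inequality $\g_2^*(A)\le\inf\{\|B\|_{S,1}\|C\|_{S,\infty}:A=B\circ C\}$.} Fix any factorization $A=B\circ C$; necessarily $C\ne0$, and after rescaling $B\mapsto\|C\|_{S,\infty}B$, $C\mapsto C/\|C\|_{S,\infty}$ we may assume $\|C\|_{S,\infty}=1$, i.e.\ that $C$ is a contraction. Write a singular value decomposition $B=\sum_k s_k\,u_k v_k^{*}$ with $s_k=s_k(B)\ge0$ and $\{u_k\}$, $\{v_k\}$ orthonormal, so that $B_{ij}=\sum_k s_k(u_k)_i\overline{(v_k)_j}$ and $\sum_k s_k=\|B\|_{S,1}$. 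Forming the Schur product with $C$ gives $A=\sum_k s_k\,\big((u_k v_k^{*})\circ C\big)$, where the $k$-th summand has $(i,j)$-entry $(u_k)_i\,\overline{(v_k)_j}\,C_{ij}$. Applying Corollary~\ref{cor:grothendieck-decomp} to this summand — with scaling vectors $\alpha_i=(u_k)_i$, $\beta_j=\overline{(v_k)_j}$ and contraction $X=C$ — bounds its $\g_2^*$-norm by $|u_k|_2|v_k|_2=1$. Since $\g_2^*$ is a norm, the triangle inequality yields $\g_2^*(A)\le\sum_k s_k=\|B\|_{S,1}=\|B\|_{S,1}\|C\|_{S,\infty}$, and taking the infimum over all factorizations finishes the proof.

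Combining the two inequalities gives \eqref{eq:schur-decomp}. The main step — essentially the only idea — is the observation that Corollary~\ref{cor:grothendieck-decomp} is precisely the rank-one case of the proposition, after which the singular value decomposition and subadditivity of $\g_2^*$ do the rest; I do not anticipate a genuine obstacle. The one technical wrinkle is that Corollary~\ref{cor:grothendieck-decomp} is phrased with \emph{real} scaling vectors, whereas the vectors $u_k,v_k$ in the displayed decomposition are complex when $A$ is. This is harmless: replacing $C$ by $D_1 C D_2$ for suitable diagonal unitary matrices $D_1,D_2$ absorbs the phases of the $(u_k)_i$ and $\overline{(v_k)_j}$ without changing $\|C\|$, so that Corollary~\ref{cor:grothendieck-decomp} may be applied with the real vectors $\alpha_i=|(u_k)_i|$, $\beta_j=|(v_k)_j|$, which still satisfy $|\alpha|_2=|\beta|_2=1$. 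In the real case this step is vacuous.
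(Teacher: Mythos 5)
Your first inequality is exactly the paper's: Corollary \ref{cor:grothendieck-decomp} supplies a near-optimal decomposition $A_{ij}=\alpha_i\beta_j X_{ij}$, and the rank-one matrix $B=\alpha\beta^{*}$ with $C=X$ witnesses the factorization. For the reverse inequality you take a genuinely different route. You expand $B$ by its singular value decomposition, observe that each rank-one piece $(u_kv_k^{*})\circ C$ falls under the rank-one case of the statement (which is what Corollary \ref{cor:grothendieck-decomp} asserts), handle the complex phases correctly by diagonal unitaries, and sum using subadditivity of $\g_2^{*}$. The paper instead fixes unit vectors $w_i,z_j$, sets $Y_{ij}=\ip{w_i}{z_j}$, writes $\sum_{ij}A_{ij}Y_{ij}=\tr(B^{*}(C\circ Y))$, applies trace duality $|\tr(B^{*}(C\circ Y))|\le\|B\|_{S,1}\|C\circ Y\|_{S,\infty}$, and proves $\|C\circ Y\|_{S,\infty}\le 1$ by representing $C_{ij}=\ip{x_i}{y_j}$ with orthogonal families of norm at most one and noting $(C\circ Y)_{ij}=\ip{x_i\otimes w_i}{y_j\otimes z_j}$. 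Your version is structurally cleaner (it isolates the rank-one case and lets the triangle inequality do the rest); the paper's version is self-contained.

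The caveat, and it matters for logical bookkeeping: the half of Corollary \ref{cor:grothendieck-decomp} you invoke --- that \emph{any} decomposition $M_{ij}=\alpha_i\beta_j X_{ij}$ with $X$ a contraction forces $\g_2^{*}(M)\le|\alpha|_2|\beta|_2$ --- is its nontrivial half, and the paper's proof of that corollary only establishes the other half (from the optimal decomposition of Proposition \ref{prop:lee11} it builds $(\alpha,\beta,X)$ with $|\alpha|_2|\beta|_2=\g_2^{*}(A)$, i.e.\ the infimum is at most $\g_2^{*}(A)$). In the paper's development the inequality you need is in effect the rank-one case of the very proposition you are proving, and it is supplied by the trace-duality/tensor argument inside this proof. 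So your argument is valid relative to the paper's \emph{statements} but circular relative to its \emph{proofs}, unless you add the missing step; it is short: for unit vectors $w_i,z_j$ and $Y_{ij}=\ip{w_i}{z_j}$ one has $\|X\circ Y\|_{S,\infty}\le\|X\|_{S,\infty}$ (e.g.\ by the same tensor trick, or because $\g_2(Y)\le1$ and Schur multiplication by $Y$ is then an operator-norm contraction), whence $\bigl|\sum_{ij}\alpha_i\beta_jX_{ij}\ip{w_i}{z_j}\bigr|=|\alpha^{\t}(X\circ Y)\beta|\le|\alpha|_2|\beta|_2$. With that one line included, your SVD-plus-subadditivity reduction is a correct and arguably tidier repackaging of the paper's argument.
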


\begin{proof}
    Let $\rho(A)$ the right hand side of equation (\ref{eq:schur-decomp}). Writing $A_{ij} = \al_i\beta_j X_{ij}$, where $X$ is a contraction, we have that $A = B\circ X$, where $B_{ij} = \al_i\beta_j$. Since $B$ is rank-one, we have that $\|B\|_{S,1} = |\al|_2|\beta|_2$. By Corollary \ref{cor:grothendieck-decomp} this shows that $\rho(A)\leq \g_2^*(A)$.
    
    Conversely, write $A = B\circ C$, where $\|C\|_{S,\infty} =1$. Let $H$ be a real Hilbert space, and let $w_1,\dotsc,w_n,z_1,\dotsc,z_n\in H$ be unit vectors. Writing $Y_{ij} = \ip{w_i}{z_j}$, we have that
    \begin{equation*}
        \left|\sum_{i,j=1}^n A_{ij}Y_{ij}\right| = \left|\sum_{i,j=1}^n B_{ij}C_{ij}Y_{ij}\right| = \left|\tr(B^* (C\circ Y))\right| \leq \|B\|_{S,1}\|C\circ Y\|_{S,\infty},
    \end{equation*}
    where the last inequality follows by duality of the $S,1$- and $S,\infty$-norms, that is,
    \[ |\tr(AB)| \leq \|A\|_{S,1}\|B\|_{S,\infty}.\] Writing $C_{ij} = \ip{x_i}{y_j}$ for orthogonal families of vectors with norms at most one, we have that the same holds for
    \[(C\circ Y)_{ij} = \ip{x_i\otimes w_i}{y_j\otimes z_j}.\]
    Since $\{x_1\otimes w_1,\dotsc, x_n\otimes w_n\}$ and $\{y_1\otimes z_1,\dotsc, y_n\otimes z_n\}$ are still orthogonal families of vectors with norms at most one, we have that $C\circ Y$ is a contraction, hence $\|C\circ Y\|_{S,\infty}\leq 1= \|C\|_{S,\infty}$. In this way $\g_2(A)\leq \rho(A)$.
\end{proof}

\begin{remark}
    For $A\in M_n$, let $\de_A: M_n\to M_n$, be linear operator given by Schur multiplication by $A$; that is, $\de_A(X) = A\circ X$. It can be easily seen that for any matrix norm $\eta$ on $M_n$ satisfying $\eta(A^* ) = \eta(A)$ that $\|\de_A\|_{\eta\to \eta} = \|\de_A\|_{\eta^*\to\eta^*}$, where $\eta^*$ is the dual norm to $\eta$. It is well-known from the work of Grothendieck that $\g_2(A) = \|\de_A\|_{S,\infty\to S,\infty}$; see, for instance, \cite[Theorem 5.1]{PisierSim}, cf.~\cite[Theorem 9]{Lee2008}. This gives an alternate way of deriving Proposition \ref{prop:gr-norm-product}.
\end{remark}

\section{The Grothendieck Norm and Correlations}

\begin{remark}
    Unless otherwise specified, in this section all matrices and vector spaces are real.
\end{remark}

Let $A\in M_n$ be a symmetric matrix with only zero entries on the main diagonal, i.e., $A_{ii} =0$, $i=1,\dotsc,n$. (We will call such a matrix \emph{hollow}.) We refer to the problem
\begin{equation}\label{eq:corr-prob}
    \begin{aligned}
        & \text{minimize} && \tr(D)\\
        & \text{s.t.} && D + A\succeq 0\\
        & && D\ \text{diagonal}
    \end{aligned}
\end{equation}
as the \emph{correlation problem}. This is equivalent to minimizing $\sum_i |x_i|^2$ where $x_1,\dotsc,x_n\in \ell_2$ are such that $\ip{x_i}{x_j} =A_{ij}$ for all $i\not=j$. To study this problem, it makes sense to find a natural norm which somehow captures the quantity we seek. 

Notice that if $A\not=0$ and $D+A\succeq 0$, then $D\not=0$ as a positive semidefinite matrix which has only zeroes on the main diagonal must have zero entries everywhere. If $D_1 + A_1\succeq 0$ and $D_2 + A_2\succeq 0$, then $(D_1+D_2) + (A_1+A_2)\succeq 0$, which shows that the output of (\ref{eq:corr-prob}) is subadditive. The only potential issue is that the output of (\ref{eq:corr-prob}) may differ for $A$ and $-A$. To address this, we introduce two norms:

\begin{defn}
    Let $A\in M_n$ be a symmetric matrix. We define
    \begin{align}
         \|A\|_C &:= \min_{D\ \textup{diagonal}} \{\tr(D) : D\succeq A\succeq -D \}\\
        \|A\|_{C'} &:= \min_{D_1,D_2\ \textup{diagonal}} \{\tr(D_1+D_2)/2 : D_1\succeq A\succeq -D_2\}.  
    \end{align}
\end{defn}

\begin{remark}
    It is easy to see that $\|A\|_{C'}\leq \|A\|_C$. Moreover, taking $D_{ii} = \max\{(D_1)_{ii}, (D_2)_{ii}\}$, it can be seen that $\|A\|_C\leq 2\|A\|_{C'}$. Taking $A$ to be the hollow matrix with all entries equal to $1$ off the main diagonal, this upper estimate of $\|A\|_{C}$ can be seen to be asymptotically sharp.
\end{remark}

\begin{lem}\label{lem:gr-sym}
Let $A$ be a symmetric matrix. We have that 
\begin{equation}
    \g_2^*(A) = \sup \sum_{i,j=1}^n A_{ij}\left(\ip{x_i}{x_j} - \ip{x_i'}{x_j'}\right)
\end{equation}
where $|x_i|_2^2 + |x_i'|_2^2 =1$ for each $i=1,\dotsc, n$.
\end{lem}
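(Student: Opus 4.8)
The plan is to show the two inequalities separately, using in both directions the basic dictionary between the Grothendieck norm and inner products of vectors of varying length. For the ``$\leq$'' direction, start from the standard description
\[
\g_2^*(A) = \sup\left\{ \left|\sum_{i,j} A_{ij}\ip{u_i}{v_j}\right| : |u_i|_2 = |v_j|_2 = 1\right\}.
\]
Given competitors $u_1,\dots,u_n,v_1,\dots,v_n$ realizing a value close to $\g_2^*(A)$, I would symmetrize: since $A$ is symmetric, $\sum_{i,j}A_{ij}\ip{u_i}{v_j} = \sum_{i,j}A_{ij}\ip{v_i}{u_j}$, so the value equals $\tfrac12\sum_{i,j}A_{ij}(\ip{u_i}{v_j}+\ip{v_i}{u_j})$. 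Now set $x_i = \tfrac{1}{\sqrt2}(u_i\oplus v_i)$ and $x_i' = \tfrac1{\sqrt2}(u_i\oplus(-v_i))$ in the doubled Hilbert space $H\oplus H$; then $|x_i|_2^2 + |x_i'|_2^2 = \tfrac12(|u_i|^2+|v_i|^2)+\tfrac12(|u_i|^2+|v_i|^2)=|u_i|^2 = 1$ wait---I need to recompute: $|x_i|_2^2 = \tfrac12(|u_i|^2+|v_i|^2) = 1$ already, which is too big, so instead I should scale by $\tfrac12$ rather than $\tfrac1{\sqrt2}$, i.e.\ take $x_i=\tfrac12(u_i\oplus v_i)$, $x_i'=\tfrac12(u_i\oplus(-v_i))$, giving $|x_i|_2^2=|x_i'|_2^2=\tfrac14\cdot2 = \tfrac12$ and $|x_i|_2^2+|x_i'|_2^2=1$, while $\ip{x_i}{x_j}-\ip{x_i'}{x_j'} = \tfrac14\big((\ip{u_i}{u_j}+\ip{v_i}{v_j}) - (\ip{u_i}{u_j}+\ip{v_i}{v_j})\big)$; that kills the wrong terms. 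The right pairing is $x_i = \tfrac12(u_i\oplus v_i)$, $x_i' = \tfrac12(v_i\oplus(- u_i))$ (swap the roles in $x_i'$), so that $\ip{x_i}{x_j}-\ip{x_i'}{x_j'} = \tfrac14\big((\ip{u_i}{u_j}+\ip{v_i}{v_j}) - (\ip{v_i}{v_j}+\ip{u_i}{u_j})\big) = 0$ again. I will need to be a little more careful with the bookkeeping, but the correct construction is standard: one wants $\ip{x_i}{x_j}-\ip{x_i'}{x_j'}$ to reproduce $\tfrac12(\ip{u_i}{v_j}+\ip{u_j}{v_i})$, which is achieved by a ``polarization'' choice such as $x_i = \tfrac12(u_i\oplus v_i)$, $x_i' = \tfrac12(u_i\oplus(-v_i))$, for which $\ip{x_i}{x_j}-\ip{x_i'}{x_j'} = \ip{u_i}{v_j}$ after expanding $\tfrac14[(\ip{u_i}{u_j}+\ip{u_i}{v_j}+\ip{v_i}{u_j}+\ip{v_i}{v_j}) - (\ip{u_i}{u_j}-\ip{u_i}{v_j}-\ip{v_i}{u_j}+\ip{v_i}{v_j})] = \tfrac14[2\ip{u_i}{v_j}+2\ip{v_i}{u_j}] = \tfrac12(\ip{u_i}{v_j}+\ip{v_i}{u_j})$; summing against the symmetric $A$ recovers the full sum $\sum A_{ij}\ip{u_i}{v_j}$. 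Since $|x_i|_2^2+|x_i'|_2^2 = \tfrac14(2|u_i|^2) + \tfrac14(2|v_i|^2)$... this is $\tfrac12|u_i|^2 + \tfrac12|v_i|^2 = 1$, so the normalization constraint of the Lemma holds, and we get $\g_2^*(A) \le$ the right-hand side.

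For the reverse inequality ``$\geq$'', suppose we are given vectors $x_i, x_i'$ with $|x_i|_2^2 + |x_i'|_2^2 = 1$ achieving (a value close to) the supremum on the right-hand side. I would like to produce unit vectors $u_i, v_i$ with $\sum_{i,j}A_{ij}\ip{u_i}{v_j}$ equal to $\sum_{i,j}A_{ij}(\ip{x_i}{x_j}-\ip{x_i'}{x_j'})$, or at least no smaller in absolute value. The natural move is to pass to $H\oplus H$ and set $u_i = x_i \oplus x_i'$, $v_i = x_i \oplus (-x_i')$; then $|u_i|_2 = |v_i|_2 = (|x_i|_2^2+|x_i'|_2^2)^{1/2} = 1$, and $\ip{u_i}{v_j} = \ip{x_i}{x_j} - \ip{x_i'}{x_j'}$ exactly. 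Hence $\sum_{i,j}A_{ij}\ip{u_i}{v_j} = \sum_{i,j}A_{ij}(\ip{x_i}{x_j}-\ip{x_i'}{x_j'})$, and taking absolute values and the supremum over all such $u,v$ shows the right-hand side is $\le \g_2^*(A)$. Combining the two gives equality.

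I expect no serious obstacle here; the only thing requiring care is the arithmetic of the two orthogonal-direct-sum constructions (getting the factors of $\tfrac12$ and the signs right so that the normalization $|x_i|^2+|x_i'|^2=1$ and the identity $\ip{u_i}{v_j}=\ip{x_i}{x_j}-\ip{x_i'}{x_j'}$ hold simultaneously), and the use of symmetry of $A$ to symmetrize the bilinear form in the first direction. One subtlety worth a sentence: the right-hand side of the Lemma has no absolute value around the sum, so strictly it is $\sup\sum(\cdots)$; this matches $\g_2^*$ because replacing $x_i'\leftrightarrow$ suitable reflections, or simply because $\sum_{i,j}A_{ij}\ip{u_i}{v_j}$ and its negative are both attained (replace $v_j$ by $-v_j$, or note $A$ and the construction are symmetric enough), so dropping the absolute value costs nothing. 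I would include that remark to make the two formulations line up cleanly.
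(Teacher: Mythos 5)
Your proposal is correct and follows essentially the same route as the paper: use the symmetry of $A$ to symmetrize the bilinear form, a polarization trick to trade unit vectors $u_i,v_i$ for pairs $x_i,x_i'$ with $|x_i|_2^2+|x_i'|_2^2=1$, and the vectors $u_i=x_i\oplus x_i'$, $v_i=x_i\oplus(-x_i')$ for the reverse inequality (a direction the paper leaves implicit, so spelling it out is a plus). The one thing you must repair is notation in the forward direction: your final formulas $x_i=\tfrac12(u_i\oplus v_i)$, $x_i'=\tfrac12(u_i\oplus(-v_i))$ only work if $\oplus$ is read as ordinary addition in the same Hilbert space, i.e.\ $x_i=\tfrac12(u_i+v_i)$, $x_i'=\tfrac12(u_i-v_i)$; with a genuine direct sum the cross terms vanish and the difference is $0$, exactly as you observed two lines earlier. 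The expansion you display is the polarization identity for sums, and with that reading the identity $\ip{x_i}{x_j}-\ip{x_i'}{x_j'}=\tfrac12\left(\ip{u_i}{v_j}+\ip{v_i}{u_j}\right)$ and the normalization $|x_i|_2^2+|x_i'|_2^2=1$ (parallelogram law) are both correct, so the argument goes through; your remark about the absence of absolute values on the right-hand side is also the right one to make. (The paper's forward direction carries out the same polarization with symmetrized vectors in $H\oplus H$; your same-space version is, if anything, cleaner.)
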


\begin{proof}
Let $\g_2^*(A) = \sum_{i,j=1}^n A_{ij}\ip{x_i}{y_j}$ with $|x_i|_2 = |y_j|_2=1$.
By symmetry of $A$ we have that 
\begin{equation*}
    \sum_{i,j=1}^n A_{ij}\ip{x_i}{y_j} = \sum_{i.j=1}^n \frac{1}{2} A_{ij}(\ip{x_i}{y_j} + \ip{x_j}{y_i})
\end{equation*}
Setting
\begin{equation*}
    z_i = \frac{1}{2}(x_i\oplus y_i + y_i\oplus x_i), \quad w_i = \frac{1}{2}(x_i\oplus y_i - y_i\oplus x_i)
\end{equation*}
it is easy to check that \[\frac{1}{2}(\ip{x_i}{y_j} + \ip{x_j}{y_i}) = \ip{z_i}{z_j} - \ip{w_i}{w_j}\] and that $|z_i|_2^2 + |w_i|_2^2 =1$. \qedhere
\end{proof}

\begin{prop}\label{prop:ACisAGR}
    Let $A\in M_n$ be a symmetric matrix. We have that $\|A\|_C = \g_2^*(A)$.
\end{prop}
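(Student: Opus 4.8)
The plan is to prove the two inequalities $\|A\|_C \leq \g_2^*(A)$ and $\g_2^*(A) \leq \|A\|_C$ separately, using the SDP characterization \eqref{eq:g2dual-sdp} of the Grothendieck norm together with the symmetrization Lemma~\ref{lem:gr-sym}. For the inequality $\|A\|_C \leq \g_2^*(A)$, I would start from an optimal feasible point of \eqref{eq:g2dual-sdp}: diagonal matrices $X, Y$ with $\begin{pmatrix} X & A\\ A^* & Y\end{pmatrix}\succeq 0$ and $\tr(X+Y)/2 = \g_2^*(A)$. Since $A = A^*$ is symmetric, I can symmetrize this certificate: replacing $(X,Y)$ by $(Y,X)$ still gives a feasible point with the same objective value (because $\begin{pmatrix} Y & A^*\\ A & X\end{pmatrix}\succeq 0$ is conjugate to the original by the swap permutation, and $A^* = A$), so by convexity $D := (X+Y)/2$, which is diagonal, satisfies $\begin{pmatrix} D & A\\ A & D\end{pmatrix}\succeq 0$ with $\tr(D) = \g_2^*(A)$. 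Conjugating this $2n\times 2n$ matrix by $\frac{1}{\sqrt 2}\begin{pmatrix} I & I\\ I & -I\end{pmatrix}$ block-diagonalizes it into $D+A$ and $D-A$, so both $D + A\succeq 0$ and $D - A\succeq 0$, i.e. $D\succeq A\succeq -D$. Hence $D$ is feasible for the correlation-type problem defining $\|A\|_C$, giving $\|A\|_C\leq \tr(D) = \g_2^*(A)$.

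For the reverse inequality $\g_2^*(A)\leq \|A\|_C$, I would use Lemma~\ref{lem:gr-sym}: it suffices to bound $\sum_{i,j} A_{ij}(\ip{x_i}{x_j} - \ip{x_i'}{x_j'})$ for arbitrary vectors with $|x_i|_2^2 + |x_i'|_2^2 = 1$. Fix an optimal diagonal $D$ for $\|A\|_C$, so $D\succeq A\succeq -D$ and $\tr(D) = \|A\|_C$. Writing $X = [\ip{x_i}{x_j}]$ and $X' = [\ip{x_i'}{x_j'}]$, both are positive semidefinite with $X_{ii} + X'_{ii} = 1$. Then from $D - A\succeq 0$ we get $\sum_{i,j}(D-A)_{ij} X_{ij} = \tr((D-A)X)\geq 0$ (since the trace pairing of two PSD matrices is nonnegative), i.e. $\sum_{i,j} A_{ij} X_{ij}\leq \sum_i D_{ii} X_{ii}$; similarly from $D + A\succeq 0$ we get $-\sum_{i,j} A_{ij} X'_{ij}\leq \sum_i D_{ii} X'_{ii}$. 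Adding these,
\begin{equation*}
    \sum_{i,j} A_{ij}(X_{ij} - X'_{ij}) \leq \sum_i D_{ii}(X_{ii} + X'_{ii}) = \sum_i D_{ii} = \tr(D) = \|A\|_C.
\end{equation*}
Taking the supremum over all such families (and noting the bound is symmetric under $A\mapsto -A$, which is exactly what $\|A\|_C$ already accounts for via $D\succeq A\succeq -D$), Lemma~\ref{lem:gr-sym} yields $\g_2^*(A)\leq \|A\|_C$.

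I expect the main subtlety to be bookkeeping rather than any deep obstacle: one must make sure the symmetrization in the first half genuinely produces a feasible point with diagonal $D$ and the correct objective (checking the permutation-conjugation argument for the swap of the two blocks), and in the second half one must correctly match the "$x,x'$" decomposition of Lemma~\ref{lem:gr-sym} to the two PSD conditions $D\pm A\succeq 0$ — in particular verifying that the $X_{ii} + X'_{ii} = 1$ normalization is precisely what collapses the right-hand side to $\tr(D)$. A minor point to handle cleanly is the existence of optimizers for both SDPs (so that "infimum/minimum" is attained), which is standard since the feasible sets can be taken compact after an a priori norm bound, but it is worth a sentence. No step looks genuinely hard once the block-diagonalization trick $\begin{pmatrix} D & A\\ A & D\end{pmatrix} \leftrightarrow (D+A)\oplus(D-A)$ is in hand.
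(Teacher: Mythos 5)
Your proof is correct, but it follows a genuinely different route from the paper. The paper uses Lemma~\ref{lem:gr-sym} to express $\g_2^*(A)$ as the optimal value of a maximization SDP with objective $\tr(J(A)X)$, computes the dual of that program, observes that the dual is literally the program defining $\|A\|_C$, and then invokes strong duality for semidefinite programs (citing \cite{Gartner2012}) to conclude equality. You instead prove the two inequalities directly: for $\|A\|_C\leq \g_2^*(A)$ you take a (near-)optimal certificate $(X,Y)$ of the quoted characterization (\ref{eq:g2dual-sdp}), symmetrize it using $A=A^\t$ (swap conjugation plus convexity of the feasible set) to obtain a single diagonal $D$ with $\begin{pmatrix} D & A\\ A & D\end{pmatrix}\succeq 0$, and block-diagonalize by $\tfrac{1}{\sqrt 2}\begin{pmatrix} I & I\\ I & -I\end{pmatrix}$ to get $D\pm A\succeq 0$; for $\g_2^*(A)\leq \|A\|_C$ you combine Lemma~\ref{lem:gr-sym} with the nonnegativity of $\tr(PQ)$ for positive semidefinite $P,Q$, which is weak duality carried out by hand. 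Both halves check out: the conjugation identity $U\begin{pmatrix} D & A\\ A & D\end{pmatrix}U=(D+A)\oplus(D-A)$ needs only symmetry of $A$, and the normalization $X_{ii}+X_{ii}'=1$ indeed collapses the right-hand side to $\tr(D)$; the attainment issue you flag is harmless since $\e$-optimal certificates suffice on both sides. What your argument buys is that it avoids the appeal to strong SDP duality (and any constraint-qualification bookkeeping), and it makes the relevant duality explicit in this special case; what it costs is reliance on (\ref{eq:g2dual-sdp}), a fact the paper quotes from the literature but does not prove, whereas the paper's proof needs only Lemma~\ref{lem:gr-sym} together with the cited duality theorem. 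Since (\ref{eq:g2dual-sdp}) is itself essentially a duality statement, the two proofs are close in spirit, but yours is the more self-contained of the two granted that characterization.
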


\begin{proof}
    Let $J(A) := \begin{pmatrix} A & 0\\ 0 & -A\end{pmatrix}$. Using Lemma \ref{lem:gr-sym} we write the following semidefinite program which computes $\g_2^*(A)$:
    \begin{equation}
    \begin{aligned}
        &\text{maximize} && \tr(J(A)X)\\
        &\text{s.t.} && X_{ii}+ X_{i+n,i+n} = 1, &&& i=1,\dotsc,n\\
        &                  && X\succeq 0.
    \end{aligned}
\end{equation}

The dual program to this is:
\begin{equation}
    \begin{aligned}
        &\text{minimize} && \tr(D)\\
        &\text{s.t.} && \begin{pmatrix} D - A & 0\\ 0 & D+A\end{pmatrix}\succeq 0\\
        & && D\ \textup{diagonal}
    \end{aligned}
\end{equation}
which is manifestly the correlation norm of $A$. By strong duality for semidefinite programs \cite[Theorem 4.1.1]{Gartner2012} these programs compute the same value. \qedhere
    
\end{proof}

\begin{remark}
    Let $A\in M_n$ be an $n\times n$ matrix. We define
\[\widetilde A = \begin{pmatrix} 0 & A\\ A^\t & 0\end{pmatrix}\in M_{2n}(\bb R).\] Note that $\widetilde A$ is symmetric and the its eigenvalues are (with multiplicity) $\{\pm s_i : i=1,\dots,n\}$ where the $s_i$'s are the singular values of $A$. Further, it is easy to see that for any diagonal matrix $D$ that $D + \widetilde A\succeq 0$ if and only if $D - \widetilde A\succeq 0$, hence this generalizes the program (\ref{eq:g2dual-sdp}) above.
\end{remark}

We now give a characterization of $\|A\|_{C'}$ which is similar to Proposition \ref{prop:lee11} above.

\begin{prop}\label{prop:gr-orth-decomp}
    Let $A\in M_n$ be a symmetric, hollow matrix. We have that $ \|A\|_{C'}$ is the minimal value $\de^*$ so that there exist orthogonal families of vectors $\{x_1,\dots,x_n\}$ and $\{y_1,\dotsc,y_n\}$ with 
    \begin{equation*}
        \de^* = \sum_{i=1}^n |x_i|_2^2 = \sum_{i=1}^n |y_i|_2^2
    \end{equation*}
    so that $A_{ij} = \ip{x_i}{y_j}$ for all $i\not=j$.
\end{prop}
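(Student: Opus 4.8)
The plan is to mirror the Gram-matrix argument behind Proposition~\ref{prop:lee11}, the one new ingredient being a congruence that block-diagonalizes the relevant $2\times 2$ block matrix. The key elementary computation is that, for a real symmetric $A$ and diagonal matrices $D, E$, conjugating $\begin{pmatrix} D & A+E \\ A+E & D\end{pmatrix}$ by the orthogonal involution $\tfrac1{\sqrt2}\begin{pmatrix} I & I \\ I & -I\end{pmatrix}$ produces $\begin{pmatrix} (D-E)-A & 0 \\ 0 & (D+E)+A\end{pmatrix}$. Hence, writing $D_1 = D-E$ and $D_2 = D+E$ (again diagonal), this block matrix is positive semidefinite exactly when $D_1 \succeq A \succeq -D_2$, in which case $\tr D = \tr(D_1+D_2)/2$; conversely every pair with $D_1\succeq A\succeq -D_2$ arises from $D = (D_1+D_2)/2$, $E = (D_2-D_1)/2$. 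This sets up a dictionary between feasible points of the program defining $\|A\|_{C'}$ and positive semidefinite block matrices of the form $\begin{pmatrix} D & A+E\\ A+E & D\end{pmatrix}$ with $D,E$ diagonal.

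For the inequality $\de^* \le \|A\|_{C'}$, I would take diagonal $D_1, D_2$ achieving $\|A\|_{C'}$ --- a minimizer exists because hollowness of $A$ forces $D_1, D_2\succeq 0$, making the sublevel sets of the objective compact --- pass to the corresponding $D, E$, write the real positive semidefinite matrix $\begin{pmatrix} D & A+E \\ A+E & D\end{pmatrix}$ as $G^\t G$, and let $\{x_i\}_{i=1}^n$ and $\{y_j\}_{j=1}^n$ be the first $n$ and last $n$ columns of $G$. Since $D$ is diagonal these are orthogonal families; since $E$ is diagonal and $A$ hollow, $\ip{x_i}{y_j} = (A+E)_{ij} = A_{ij}$ for $i\ne j$; and $\sum_i|x_i|^2 = \sum_j|y_j|^2 = \tr D = \|A\|_{C'}$, so $\de^* \le \|A\|_{C'}$.

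For the reverse inequality I would run this backwards: from orthogonal families with $\ip{x_i}{y_j}=A_{ij}$ for $i\ne j$ and $\sum|x_i|^2 = \sum|y_j|^2 = \de$, the joint Gram matrix has the shape $\begin{pmatrix} X & A+E \\ A+E & Y\end{pmatrix}\succeq 0$ with $X, Y, E$ diagonal --- this is where symmetry of $A$ enters, ensuring both off-diagonal blocks are genuinely $A+E$ rather than merely agreeing with $A$ off the diagonal. Averaging this matrix with its conjugate by the block swap $\begin{pmatrix} 0 & I\\ I & 0\end{pmatrix}$ replaces $X, Y$ by $D := (X+Y)/2$ without changing the trace, and the congruence above then produces diagonal $D_1\succeq A\succeq -D_2$ with $\tr(D_1+D_2)/2 = \tr D = \de$. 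Taking the infimum over all such decompositions yields $\|A\|_{C'}\le \de^*$; combined with the previous paragraph this gives $\de^* = \|A\|_{C'}$, with the infimum attained.

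I do not anticipate a genuine obstacle: the argument is the congruence identity together with two square-root factorizations of positive semidefinite matrices. The points that want care are formal --- that the $(2,1)$ block of the joint Gram matrix really equals $A+E$ (where symmetry of $A$ is used) and that the correlation-type minimum is attained (where hollowness is used). Alternatively, one can package everything as the identity $\|A\|_{C'} = \inf_{E\ \text{diagonal}}\g_2^*(A+E)$, read off from the semidefinite program~(\ref{eq:g2dual-sdp}), and then quote Proposition~\ref{prop:lee11} applied to each $A+E$.
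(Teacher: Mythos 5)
Your argument is correct, and in substance it is the paper's proof transcribed to the level of block matrices rather than vectors: the congruence by $\tfrac1{\sqrt2}\begin{pmatrix} I & I\\ I & -I\end{pmatrix}$ applied to $\begin{pmatrix} D & A+E\\ A+E & D\end{pmatrix}$ is exactly the paper's construction $z_i=\tfrac1{\sqrt2}(x_i\oplus y_i)$, $w_i=\tfrac1{\sqrt2}(x_i\oplus -y_i)$ in one direction, and your block-swap averaging followed by that congruence is exactly the paper's symmetrized vectors $z_i=\tfrac1{\sqrt2}(x_i\oplus y_i+y_i\oplus x_i)$, $w_i=\tfrac1{\sqrt2}(x_i\oplus y_i-y_i\oplus x_i)$ in the other. (A harmless slip: the conjugation produces the diagonal blocks $D+E+A$ and $D-E-A$ in the opposite order from what you wrote; the equivalence with $D_1\succeq A\succeq -D_2$ for $D_1=D-E$, $D_2=D+E$ and the trace bookkeeping are unaffected.) What your write-up genuinely adds is the explicit diagonal slack $E$ recording the unconstrained inner products $\ip{x_i}{y_i}$, which yields your closing identity $\|A\|_{C'}=\inf_{E\ \text{diagonal}}\g_2^*(A+E)$; that reduction to the program (\ref{eq:g2dual-sdp}) together with Proposition \ref{prop:lee11} is a different and arguably cleaner packaging than the paper's self-contained vector argument, at the price of invoking the SDP characterization of $\g_2^*$ as a black box. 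Your attainment remark (hollowness forces $D_1,D_2\succeq 0$, hence compact sublevel sets) is correct but not strictly needed, since both directions can be run with arbitrary feasible points and then infimized, which is how the paper proceeds.
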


\begin{proof}
    Let $D_1, D_2$ be diagonal matrices be such that $D_1 +A\succeq 0$ and $D_2 - A\succeq 0$. Let $B = D_1+A$ and $C = D_2-A$. We have that there exist families of vectors $\{x_1,\dots,x_n\}$ and $\{y_1,\dotsc,y_n\}$ so that $B_{ij} = \ip{x_i}{x_j}$ and $C_{ij} = \ip{y_i}{y_j}$. Note that
    \begin{equation}\label{eq:energy}
        \sum_{i=1}^n |x_i|_2^2 = \tr(D_1), \quad \sum_{i=1}^n |y_i|_2^2 = \tr(D_2)
    \end{equation}
    Since $A$ is hollow.
    
    Set $z_i = \frac{1}{\sqrt 2} (x_i\oplus y_i)$ and $w_i = \frac{1}{\sqrt 2} (x_i\oplus -y_i)$ and note that for all $i\not= j$ we have that
    \begin{equation*}
        \ip{z_i}{z_j} = \ip{w_i}{w_j} = \frac{1}{2}(B_{ij} + C_{ij}) = \frac{1}{2}(A_{ij} - A_{ij}) =0;
    \end{equation*}
    hence $\{z_1,\dotsc,z_n\}$ and $\{w_1,\dotsc,w_n\}$ are orthogonal families. Further by equations (\ref{eq:energy}) it is easy to see that
    \begin{equation*}
        \sum_{i=1}^n |z_i|_2^2 = \sum_{i=1}^n |w_i|_2^2 = \frac{1}{2}(\tr(D_1) + \tr(D_2)).
    \end{equation*}
    Finally, for $i\not= j$
    \begin{equation*}
        \ip{z_i}{w_j} = \frac{1}{2}\left(\ip{x_i}{x_j} - \ip{y_i}{y_j}\right) = \frac{1}{2}\left(B_{ij} - C_{ij}\right) = A_{ij}.
    \end{equation*}
    Thus $\de^*\leq \|A\|_{C'}$.
    
    In the other direction suppose that $\{x_1,\dots,x_n\}$ and $\{y_1,\dotsc,y_n\}$ are orthogonal families of vectors so that $A_{ij} = \ip{x_i}{y_j}$ for all $i\not= j$, and not that by symmetry $A_{ij} = \ip{x_j}{y_i}$ for all $i\not= j$ as well. Similarly to Lemma \ref{lem:gr-sym} set
    \begin{equation*}
        z_i = \frac{1}{\sqrt 2}(x_i\oplus y_i + y_i\oplus x_i), \quad w_i = \frac{1}{\sqrt 2}(x_i\oplus y_i - y_i\oplus x_i).
    \end{equation*}
    We have that for all $i\not= j$ that
    \begin{equation*}
        A_{ij} = \ip{z_i}{z_j}, \quad -A_{ij} = \ip{w_i}{w_j}
    \end{equation*}
    and
    \begin{equation*}
        \frac{1}{2}\sum_{i=1}^n |z_i|_2^2 + |w_i|_2^2 = \frac{1}{2}\sum_{i=1}^n |x_i|_2^2 + |y_i|_2^2.
    \end{equation*}
    Thus $\|A\|_{C'}\leq \de^*$. \qedhere
    
\end{proof}

\section{A Fourier-Type Duality Between Grothendieck and Schur Norms}

\begin{remark}
    Unless otherwise specified, in this section all matrices and vector spaces are real.
\end{remark}

Let $A = (A_{ij})\in M_{n}$, let $\cc F(A)\in M_{2^n}$ be given by 
\begin{equation*}
    \cc F(A)_{\e\eta} = \ip{A\e}{\eta} = \sum_{i,j=1}^n A_{ij}\e_i\eta_j.
\end{equation*}
where $\e,\eta\in \{\pm 1\}^n$. Notice that $\|\cc F(A)\|_{1\to\infty} = \|A\|_{\infty\to 1}$. We have that $\cc F$ witnesses the following ``Fourier duality'' with respect to the Grothendieck and Schur norms:

\begin{prop}\label{prop:gr-schur}
 We have that $\g_2(\cc F(A))\leq \g_2^*(A) \leq \frac{\pi}{2}\,\g_2(\cc F(A))$
\end{prop}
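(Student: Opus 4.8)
The plan is to realize both $\g_2^*(A)$ and $\g_2(\cc F(A))$ as the supremum of one and the same functional over two nested sets of correlation matrices, and then to pass between the two sets using Grothendieck's identity and operator monotonicity of the square root. Write $\cc C_n$ for the set of real $n\times n$ correlation matrices (positive semidefinite, $1$ on the diagonal) and $\mathsf{Cut}_n\subseteq\cc C_n$ for the polytope of matrices $\sum_{\e\in\{\pm1\}^n}\la_\e\,\e\e^\t$ with $\la_\e\geq0$, $\sum_\e\la_\e=1$ (equivalently $\mathsf{Cut}_n=\{\E[\e\e^\t]:\e\text{ a }\{\pm1\}^n\text{-valued random vector}\}$). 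I would first establish
\begin{equation*}
\g_2^*(A)=\sup_{C,D\in\cc C_n}\|C^{1/2}AD^{1/2}\|_{S,1},\qquad \g_2(\cc F(A))=\sup_{C,D\in\mathsf{Cut}_n}\|C^{1/2}AD^{1/2}\|_{S,1}.
\end{equation*}
The first is immediate from the definition of $\g_2^*$: writing the $x_i,y_j$ as columns of matrices $X,Y$ with unit columns, $\sum_{ij}A_{ij}\ip{x_i}{y_j}=\operatorname{tr}(A^\t X^\t Y)$, the Gram matrices $C=X^\t X,D=Y^\t Y$ exhaust $\cc C_n$, and for fixed $C,D$ the product $X^\t Y$ ranges exactly over $\{C^{1/2}RD^{1/2}:\|R\|\leq1\}$, so the supremum of the absolute value is $\|C^{1/2}AD^{1/2}\|_{S,1}$ by $S,1$--operator-norm duality. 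For the second I would use the SDP-dual description $\g_2(B)=\sup\{\|\Delta_aB\Delta_b\|_{S,1}:|a|_2=|b|_2=1\}$ ($\Delta_a$ the diagonal matrix with entries $a_\e$): since $(\Delta_a\cc F(A)\Delta_b)_{\e\eta}=(a_\e\e)^\t A(b_\eta\eta)$, this matrix is $\Xi^\t A\,\Upsilon$ with $\Xi,\Upsilon$ the $n\times2^n$ matrices having columns $a_\e\e$, $b_\eta\eta$; then $\Xi\Xi^\t=\sum_\e a_\e^2\,\e\e^\t$ and $\Upsilon\Upsilon^\t$ lie in $\mathsf{Cut}_n$, and $(\Xi^\t A\Upsilon)^\t(\Xi^\t A\Upsilon)$ has the same nonzero spectrum as $(C^{1/2}AD^{1/2})^\t(C^{1/2}AD^{1/2})$ for $C=\Xi\Xi^\t$, $D=\Upsilon\Upsilon^\t$, whence $\|\Xi^\t A\Upsilon\|_{S,1}=\|C^{1/2}AD^{1/2}\|_{S,1}$; running this backwards gives the formula. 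Since $\mathsf{Cut}_n\subseteq\cc C_n$ the left inequality $\g_2(\cc F(A))\leq\g_2^*(A)$ is now free (alternatively it is Proposition \ref{prop:lee11} applied to $u_\e=\sum_i\e_ix_i$, $v_\eta=\sum_j\eta_jy_j$).

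For the right-hand inequality the idea is that each $C\in\cc C_n$ is dominated by a member of $\mathsf{Cut}_n$. Grothendieck's identity $\E[\sgn\ip{x_i}{g}\sgn\ip{x_j}{g}]=\tfrac2\pi\arcsin\ip{x_i}{x_j}$ for a standard Gaussian $g$ shows that $\tfrac2\pi\arcsin(C)$ (entrywise $\arcsin$) lies in $\mathsf{Cut}_n$, and $\arcsin(C)\succeq C$ because $\arcsin(C)-C=\sum_{k\geq1}a_{2k+1}C^{\circ(2k+1)}$ with $a_{2k+1}>0$ the Taylor coefficients of $\arcsin$ and each $m$-fold Schur power $C^{\circ m}$ positive semidefinite by the Schur product theorem. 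Combined with the elementary fact that $E\succeq F\succeq0$ forces $\|E^{1/2}B\|_{S,1}\geq\|F^{1/2}B\|_{S,1}$ — true since $\|E^{1/2}B\|_{S,1}=\operatorname{tr}\big((B^\t EB)^{1/2}\big)$ and $t\mapsto t^{1/2}$ is operator monotone — one applies this twice, taking a transpose in between and using $\|M\|_{S,1}=\|M^\t\|_{S,1}$, to get for any $C,D\in\cc C_n$
\begin{equation*}
\|C^{1/2}AD^{1/2}\|_{S,1}\leq\|\arcsin(C)^{1/2}\,A\,\arcsin(D)^{1/2}\|_{S,1}=\tfrac\pi2\,\big\|(\tfrac2\pi\arcsin C)^{1/2}\,A\,(\tfrac2\pi\arcsin D)^{1/2}\big\|_{S,1}.
\end{equation*}
As $\tfrac2\pi\arcsin(C),\tfrac2\pi\arcsin(D)\in\mathsf{Cut}_n$, the right-hand side is $\leq\tfrac\pi2\,\g_2(\cc F(A))$, and taking the supremum over $C,D\in\cc C_n$ yields $\g_2^*(A)\leq\tfrac\pi2\,\g_2(\cc F(A))$.

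I expect the main obstacle to be the reformulation in the first paragraph: realizing that both quantities are suprema of $(C,D)\mapsto\|C^{1/2}AD^{1/2}\|_{S,1}$, which reduces the proposition to comparing the correlation elliptope $\cc C_n$ with the cut polytope $\mathsf{Cut}_n$. Once that is in place the constant $\tfrac\pi2=\arcsin(1)$ falls out of the two ingredients $\arcsin(C)\succeq C$ and operator monotonicity; notably no Krivine-type rounding is used — going through $\|\cc F(A)\|_{1\to\infty}=\|A\|_{\infty\to1}$ would only give the Grothendieck constant $K_G>\tfrac\pi2$ instead.
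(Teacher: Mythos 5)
Your proof is correct, but it takes a genuinely different route from the paper's. You recast both quantities as suprema of the functional $(C,D)\mapsto\|C^{1/2}AD^{1/2}\|_{S,1}$, over the elliptope $\cc C_n$ for $\g_2^*(A)$ and over the cut polytope $\mathsf{Cut}_n$ for $\g_2(\cc F(A))$ --- the latter via the identity $\g_2(B)=\sup_{|a|_2=|b|_2=1}\|\Delta_a B\Delta_b\|_{S,1}$, which you quote without proof; it does follow from the fact recorded in the paper's remark that $\g_2(B)=\|\de_B\|_{S,\infty\to S,\infty}=\|\de_B\|_{S,1\to S,1}$ together with the extreme points of the $S_1$ ball being the rank-one matrices $ab^\t$, but a line (or a citation to Mathias) to that effect is worth adding. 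With that reformulation in hand, the left inequality is just the inclusion $\mathsf{Cut}_n\subseteq\cc C_n$, and the right one comes from pushing any $C\in\cc C_n$ into $\mathsf{Cut}_n$ via Grothendieck's identity, the Schur-power bound $\arcsin(C)\succeq C$, and monotonicity of $E\mapsto\tr\bigl((B^\t E B)^{1/2}\bigr)$; all of these steps check out. The paper instead works with vectors throughout: the left inequality feeds the orthogonal-family description of Proposition \ref{prop:lee11} into Lemma \ref{prop:FA-factorize}, and the right inequality recovers $x_i,y_j$ from $x(\e),y(\eta)$ by the cube-averaging formula (\ref{eq:x-i}) and then invokes Rietz's theorem (Lemma \ref{lem:psd-grothendieck}) as a black box. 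The analytic kernel is the same --- your $\arcsin$/Schur-product-theorem computation is essentially the standard proof of the $\pi/2$ (Rietz, little Grothendieck) bound that the paper cites --- but your packaging makes the convex-geometric content explicit (elliptope versus cut polytope, and why the constant is $\arcsin(1)=\pi/2$ rather than $K_G$), whereas the paper's argument stays inside the SDP/factorization machinery it has already built, with Lemma \ref{prop:FA-factorize} being a factorization formula for $\g_2(\cc F(A))$ of independent interest.
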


Before proving this result, we will develop some preparatory lemmas.

\begin{defn}
Let $x_1,\dotsc,x_n\in \ell_2$ be an $n$-tuple of vectors. We define 
\begin{equation}
    \rho(x_1,\dotsc,x_n) := \max_{\e\in \{\pm 1\}^n} \biggl|\sum_{i=1}^n \e_i x_i\biggr|_2.
\end{equation}
\end{defn}

Note that if $x_1,\dotsc, x_n\in \ell_2(k)$, then for the $k\times n$-matrix $A$ whose columns are $x_1,\dotsc,x_n$ we have that
\begin{equation*}
    \|A\|_{\infty\to 2} = \rho(x_1,\dotsc,x_n).
\end{equation*}

\begin{lem}\label{prop:FA-factorize} Let $A\in M_n$ we have that 
\begin{equation*}
    \g_2(\cc F(A)) = \inf \rho(x_1,\dotsc,x_n)\rho(y_1,\dotsc,y_n)
\end{equation*}
where the infimum is taken over all vectors $x_1,\dotsc, x_n,y_1,\dotsc, y_n\in \ell^2$ so that $A_{ij} = \ip{x_i}{y_j}$. Equivalently, we have that 
\begin{equation*}\label{eq:FA-equivalent}
    \begin{split}
        \g_2(\cc F(A)) &= \inf_{A = BC} \|B\|_{2\to 1}\|C\|_{\infty\to 2}\\
        &= \inf_{A = B^\t C} \|B\|_{\infty\to 2}\|C\|_{\infty\to 2}\\
        &= \inf_{A = B^\t C} \|B^\t B\|_{\infty\to 1}^{1/2} \|C^\t C\|_{\infty\to 1}^{1/2}.
    \end{split}
\end{equation*}
\end{lem}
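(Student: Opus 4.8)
The plan is to run the same factorization argument that produces the standard descriptions of $\g_2$ from \eqref{eq:schur-norm-defn}, but with the roles of the $\ell_1$- and $\ell_\infty$-norms on $\bb R^n$ interchanged, exploiting that $\cc F$ trades the indexing set $\{1,\dotsc,n\}$ for $\{\pm1\}^n$. First I would observe that, by definition, $\cc F(A) = E^\t A E$ where $E$ is the $n\times 2^n$ matrix whose column indexed by $\e\in\{\pm1\}^n$ is $\e$ itself, so that $E$ acts as $\ell_1(2^n)\to \ell_\infty(n)$-type object; more usefully, for any vectors $z_1,\dotsc,z_n$ one has $(\cc F(A))_{\e\eta} = \ip{\sum_i \e_i x_i}{\sum_j \eta_j y_j}$ whenever $A_{ij}=\ip{x_i}{y_j}$. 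Thus a factorization $A = B^\t C$ with columns $x_i$ of $B$ and $y_j$ of $C$ yields a factorization $\cc F(A) = \widetilde B^\t \widetilde C$, where $\widetilde B$ has columns $\{\sum_i \e_i x_i : \e\in\{\pm1\}^n\}$ and similarly $\widetilde C$. The key point is that the columns of $\widetilde B$ range exactly over the $2^n$ sign-combinations of the $x_i$, so $\|\widetilde B\|_{1\to 2} = \max_\e |\sum_i \e_i x_i|_2 = \rho(x_1,\dotsc,x_n)$, and likewise for $\widetilde C$.

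From this the chain of equalities falls out by the same formal manipulations used for $\g_2$ in the Preliminaries. Applying the identity $\g_2(M) = \inf_{M=B^\t C}\|B\|_{1\to 2}\|C\|_{1\to 2}$ to $M=\cc F(A)$ and restricting to factorizations of the special form $\widetilde B^\t\widetilde C$ coming from factorizations of $A$ shows
\[
\g_2(\cc F(A)) \leq \inf_{A=B^\t C} \|B\|_{\infty\to 2}\|C\|_{\infty\to 2} = \inf \rho(x_i)\rho(y_j),
\]
using $\|B\|_{\infty\to 2} = \rho(\text{columns of }B)$ as noted before the lemma. The third displayed equality then follows exactly as the ``third line'' does in the definition of $\g_2$: one has $\|B\|_{\infty\to 2}^2 = \rho(x_i)^2 = \max_\e \ip{B^\t B\,\e}{\e} = \|B^\t B\|_{\infty\to 1}$, since $B^\t B$ is positive semidefinite and its $\infty\to 1$ norm is attained at a sign vector. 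And the first equality $\inf_{A=BC}\|B\|_{2\to 1}\|C\|_{\infty\to 2}$ is obtained from the $B^\t C$ form by the polar-type splitting $B^\t B = C^\t C$ trick (replace a general factorization by a balanced one), together with the duality $\|B\|_{2\to 1} = \|B^\t\|_{\infty\to 2}$ from the transpose formula $\|A^*\|_{p\to q}=\|A\|_{q^*\to p^*}$ in the Preliminaries.

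The only real content beyond bookkeeping is the reverse inequality: that every factorization of $\cc F(A)$ can be ``pulled back'' to one of $A$ of no greater cost, i.e. that $\g_2(\cc F(A)) \geq \inf \rho(x_i)\rho(y_j)$. Here the main obstacle is that an arbitrary factorization $\cc F(A) = U^\t V$ need not have its columns indexed compatibly with the sign structure. The fix is to use Proposition \ref{prop:lee11} (or directly the SDP \eqref{eq:gamma2-sdp}): $\g_2(\cc F(A))$ is computed by column vectors $u_\e, v_\eta$ with $\max_\e |u_\e|_2^2 = \max_\eta|v_\eta|_2^2 = \g_2(\cc F(A))$ and $\ip{u_\e}{v_\eta} = \ip{A\e}{\eta}$ for all $\e,\eta$. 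Setting $x_i := \tfrac12(u_{e_i^+} - u_{e_i^-})$ where $e_i^\pm$ agree with a fixed reference sign vector except in coordinate $i$ — more cleanly, averaging $u_\e$ against the $i$-th Rademacher character — recovers vectors with $\ip{x_i}{y_j}=A_{ij}$; and since each $\sum_i \e_i x_i$ is, by linearity, one of the $u_\e$ (up to the additive constant term, which one arranges to vanish by first replacing $A$ with its mean-zero part or by absorbing it), one gets $\rho(x_i) \leq \max_\e|u_\e|_2 = \g_2(\cc F(A))^{1/2}$ and similarly for the $y_j$, hence $\rho(x_i)\rho(y_j)\leq \g_2(\cc F(A))$. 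Taking the infimum finishes the argument, and combining both inequalities with the equalities established above completes the proof.
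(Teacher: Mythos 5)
The easy inequality $\g_2(\cc F(A))\leq\inf\rho(x_1,\dotsc,x_n)\rho(y_1,\dotsc,y_n)$ and your chain of formal reformulations (via $\|B\|_{\infty\to 2}=\rho(\text{columns})$, the transpose duality, and $\|B^\t B\|_{\infty\to 1}=\|B\|_{\infty\to2}^2$) are fine and are essentially the paper's argument. The gap is in the reverse inequality, at the step ``since each $\sum_i\e_i x_i$ is, by linearity, one of the $u_\e$.'' A factorization $\cc F(A)_{\e\eta}=\ip{u_\e}{v_\eta}$ imposes no linear structure whatsoever on the map $\e\mapsto u_\e$: each $u_\e$ is constrained only through its inner products with the $v_\eta$'s, so it may carry an arbitrary component orthogonal to $\spn\{v_\eta\}$. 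Concretely, take $A=0$, so $\cc F(A)=0$ and one may take all $v_\eta=0$ and $u_\e$ to be any vectors of norm at most $C$; choosing $u_\e=C\,\sgn\bigl(\textstyle\sum_i\e_i\e'_i\bigr)w$ for a fixed unit vector $w$ and a fixed sign pattern $\e'$ makes your Rademacher averages satisfy $\bigl|\sum_i\e'_i x_i\bigr|\asymp C\sqrt n$. In general the triangle inequality applied to your averaging formula only gives $\rho(x_1,\dotsc,x_n)\lesssim\sqrt n\,\max_\e|u_\e|$, which is not enough, and the ``additive constant term'' you propose to absorb is a red herring: the mean term pairs to zero automatically because $\cc F(A)$ has mean zero in each variable; the real obstruction is the component of $u_\e$ orthogonal to the $v_\eta$'s.

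What is needed, and what the paper actually does, is the following. After reconstructing $x_i,y_j$ by averaging, one must show $\rho(x_1,\dotsc,x_n)\le\max_\e|u_\e|$ and symmetrically for the $y_j$. First replace the families $\{u_\e\}$ and $\{v_\eta\}$ by orthogonal projections so that they span the same subspace; this preserves all the pairings $\ip{u_\e}{v_\eta}$ and does not increase norms. Then a direct computation from $\ip{x_i}{v_\eta}=\sum_j A_{ij}\eta_j$ shows $\ip{u_\e-\tilde x(\e)}{v_\eta}=0$ for all $\eta$, where $\tilde x(\e)=\sum_i\e_i x_i$; since after the projection $u_\e-\tilde x(\e)$ lies in $\spn\{v_\eta\}$, it vanishes, so $\tilde x(\e)=u_\e$ and hence $\rho(x_1,\dotsc,x_n)=\max_\e|u_\e|\le C$. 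This projection-plus-spanning argument is the actual content of the reverse inequality and is exactly what your ``by linearity'' shortcut skips. (A minor point: Proposition \ref{prop:lee11} concerns $\g_2^*$, not $\g_2$; the balanced optimal factorization you invoke for $\g_2(\cc F(A))$ instead comes from \eqref{eq:schur-norm-defn} or the SDP \eqref{eq:gamma2-sdp}, so that part is harmless once rephrased.)
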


\begin{proof}
    Suppose $A_{ij} = \ip{x_i}{y_j}$. It follows for $\e,\eta\in \{\pm 1\}^n$ that $\cc F(A)_{\e\eta} = \ip{x(\e)}{y(\eta)}$ where 
    \begin{equation*}
        x(\e) := \sum_{i=1}^n \e_i x_i.
    \end{equation*}
    and $y(\eta)$ is defined similarly. From (\ref{eq:schur-norm-defn}) we see that
    \begin{equation*}
        \g_2(\cc F(A)) \leq \max_{\e,\eta} |x(\e)|_2 |y(\eta)|_2 = \rho(x_1,\dotsc, x_n)\rho(y_1,\dotsc,y_n).
    \end{equation*}
    
    Conversely, suppose that $\cc F(A)_{\e\eta} = \ip{x(\e)}{y(\eta)}$ for maps $x,y: \{\pm 1\}^n \to \ell_2$ with $|x(\e)|_2,|y(\eta)|_2\leq C$ for all $\e,\eta\in \{\pm 1\}^n$. Note that 
    \begin{equation*}
        A_{ij} = \frac{1}{4^{n-1}}\sum_{\e(i)=\eta(j)=1} \cc F(A)_{\e\eta};
    \end{equation*}
    hence, $A_{ij} = \ip{x_i}{y_j}$ where
    \begin{equation}\label{eq:x-i}
        x_i := \frac{1}{2^{n-1}} \sum_{\e(i)=1} x(\e), \quad y_j := \frac{1}{2^{n-1}} \sum_{\eta(j) =1} y(\eta).
    \end{equation}
    Note that by the triangle inequality we have that $|x_i|_2,|y_j|_2\leq C$ for all $i,j=1,\dotsc,n$.
    Further, it follows from the defining formulas that
    \begin{equation*}
        \ip{x_i}{y(\eta)} = \sum_{j=1}^n A_{ij}\eta_j, \quad \ip{x(\e)}{y_j} = \sum_{i=1}^n A_{ij}\e_i.
    \end{equation*}
    
    By taking orthogonal projections, we can assume without loss of generality that 
    \[{\rm span}\{x(\e) : \e\in \{\pm 1\}^n\} = {\rm span}\{y(\eta) : \eta\in \{\pm 1\}^n\}.\]
    
    Setting 
    \begin{equation*}
        \tilde x(\e) = \sum_i \e_i x_i, \quad \tilde y(\eta) = \sum_j \eta_j y_j
    \end{equation*}
    we therefore have that 
    \begin{equation*}
        \ip{x(\e) - \tilde x(\e)}{\tilde y(\eta)} = \sum_{i,j} \e_i\eta_j A_{ij} - \sum_{i,j} \e_i\eta_j A_{ij} = 0.
    \end{equation*}
    Since the $y(\eta)$'s form are a spanning set, $\tilde x(\e) = x(\e)$ and similarly $\tilde y(\eta) = y(\eta)$.
    
    We have that
    \begin{equation}\label{eq:rho-x-i}
        \begin{split}
            &\rho(x_1,\dotsc, x_n) = \max_{\e} |\tilde x_i(\e)|_2 = \max_{\e} |x(\e)|_2\leq C\\
            &\rho(y_1,\dotsc,y_n) = \max_{\eta} |\tilde y(\eta)|_2 = \max_{\eta} |y(\eta)|_2\leq C.
        \end{split}
    \end{equation} 
    This suffices to establish the equality. \qedhere
   
\end{proof}

\begin{cor}\label{cor:FA-equality-psd}
We have that $\|A\|_{\infty\to 1}\leq \g_2(\cc F(A))$ with equality if $A$ is positive semidefinite.
\end{cor}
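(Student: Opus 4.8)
The plan is to read off both halves from factorization identities already at hand. For the inequality $\|A\|_{\infty\to1}\le\g_2(\cc F(A))$, I would first record the elementary bound $\|M\|_{1\to\infty}\le\g_2(M)$, valid for every matrix $M$: from the first line of the definition of $\g_2$, any factorization $M=BC$ satisfies $\|M\|_{1\to\infty}=\|BC\|_{1\to\infty}\le\|B\|_{2\to\infty}\|C\|_{1\to2}$, and taking the infimum over factorizations gives the claim. Applying this with $M=\cc F(A)$ and invoking the identity $\|\cc F(A)\|_{1\to\infty}=\|A\|_{\infty\to1}$ observed just before Proposition \ref{prop:gr-schur} yields the first inequality.

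For the equality when $A\succeq0$, I would specialize Lemma \ref{prop:FA-factorize}. A positive semidefinite $A$ admits a Gram factorization $A_{ij}=\ip{x_i}{x_j}$, so taking $y_i=x_i$ there gives
\[
  \g_2(\cc F(A))\le\rho(x_1,\dotsc,x_n)^2=\max_{\e\in\{\pm1\}^n}\Bigl|\sum_{i=1}^n\e_ix_i\Bigr|_2^2=\max_{\e}\ip{A\e}{\e},
\]
since $\bigl|\sum_i\e_ix_i\bigr|_2^2=\sum_{i,j}\e_i\e_j\ip{x_i}{x_j}=\sum_{i,j}A_{ij}\e_i\e_j=\ip{A\e}{\e}$. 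For the matching lower bound I would observe that Cauchy--Schwarz for the positive semidefinite bilinear form $(\e,\eta)\mapsto\ip{A\e}{\eta}$ gives $|\ip{A\e}{\eta}|\le\ip{A\e}{\e}^{1/2}\ip{A\eta}{\eta}^{1/2}$, so the maximum of $|\cc F(A)_{\e\eta}|$ over all sign pairs is attained on the diagonal $\e=\eta$ (where $\ip{A\e}{\e}\ge0$); hence $\|A\|_{\infty\to1}=\max_{\e,\eta}|\ip{A\e}{\eta}|=\max_{\e}\ip{A\e}{\e}$. Chaining this with the displayed inequality and the first part gives $\g_2(\cc F(A))\le\|A\|_{\infty\to1}\le\g_2(\cc F(A))$, hence equality.

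There is no genuine obstacle here: the proof is two short reductions. The only points needing a moment's care are the unwinding of $\rho(x_1,\dotsc,x_n)^2$ into the quadratic form $\max_\e\ip{A\e}{\e}$ after substituting the Gram vectors, and the ``maximum on the diagonal'' step, which is precisely where positivity of $A$ enters through Cauchy--Schwarz.
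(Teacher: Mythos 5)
Your proof is correct and follows essentially the same route as the paper: the first inequality is the elementary bound $\|\cc F(A)\|_{1\to\infty}\le\g_2(\cc F(A))$ combined with $\|\cc F(A)\|_{1\to\infty}=\|A\|_{\infty\to 1}$ (the paper phrases this as Cauchy--Schwarz applied to a factorization from Lemma~\ref{prop:FA-factorize}, which is the same calculation), and the equality for $A\succeq 0$ is obtained, as in the paper, by feeding the Gram factorization $A_{ij}=\ip{x_i}{x_j}$ into Lemma~\ref{prop:FA-factorize} together with the identity $\|A\|_{\infty\to 1}=\rho(x_1,\dotsc,x_n)^2$, which you re-derive via Cauchy--Schwarz for the form $(\e,\eta)\mapsto\ip{A\e}{\eta}$ instead of citing equation~(\ref{eq:infty-1-psd}).
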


\begin{proof}
If $A_{ij} = \ip{x_i}{y_j}$, then by the Cauchy--Schwarz inequality we have that 
\begin{equation*}
    \|A\|_{\infty\to 1}\leq \rho(x_1,\dotsc,x_n)\rho(y_1,\dotsc,y_n).
\end{equation*}

If $A$ is positive semidefinite, then $A_{ij} = \ip{x_i}{x_j}$ and \[\|A\|_{\infty\to 1} = \rho(x_1,\dotsc,x_n)^2\] by equation (\ref{eq:infty-1-psd}). Thus, $\g_2(\cc F(A))\leq \|A\|_{\infty\to 1}$ by Lemma \ref{prop:FA-factorize}.
\end{proof}   

We will require the following result, essentially due to Rietz \cite[Theorem 4]{Rietz1974}. (See also \cite[Section 4.2]{AlonNaor2004}.)

\begin{lem}\label{lem:psd-grothendieck}
Let $H$ be a Hilbert space and $x_1,\dotsc,x_n\in H$.
Then for all $y_1,\dotsc,y_n\in H $ with $\sup_i \|y_i\|\leq 1$ it holds that
\begin{equation*}
    \rho(x_1\otimes y_1,\dotsc,x_n\otimes y_n)\leq \sqrt{\frac{\pi}{2}} \rho(x_1,\dotsc,x_n)
\end{equation*}

\end{lem}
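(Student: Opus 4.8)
The plan is to bound the Rademacher average $\rho(x_1\otimes y_1,\dotsc,x_n\otimes y_n)$ by passing through a Gaussian randomization and exploiting the classical integral identity behind Grothendieck's inequality. First I would write, for any signs $\e\in\{\pm1\}^n$,
\begin{equation*}
    \Bigl|\sum_{i=1}^n \e_i\, x_i\otimes y_i\Bigr|_2^2 = \sum_{i,j=1}^n \e_i\e_j \ip{x_i}{x_j}\ip{y_i}{y_j},
\end{equation*}
so the quantity to be controlled is the supremum over $\e$ of this quadratic form. The key device is to replace the pointwise supremum by an average against a standard Gaussian vector. Let $g=(g_1,\dotsc,g_n)$ be i.i.d.\ $N(0,1)$, and recall the identity $\E[\sgn(g_i)\sgn(g_j)] = \frac{2}{\pi}\arcsin\bigl(\tfrac{\ip{g}{}\text{-correlation}}\bigr)$; concretely, for the Gram matrix of the $y_i$ (unit or subunit vectors) realized as $\ip{y_i}{y_j}$, one has
\begin{equation*}
    \ip{y_i}{y_j} = \sin\!\left(\frac{\pi}{2}\,\E[\,\sgn\ip{g}{u_i}\,\sgn\ip{g}{u_j}\,]\right)
\end{equation*}
for suitable unit vectors $u_i$; but the cleaner route (Rietz's) is the reverse: realize the $y_i\otimes$ structure via Gaussians so that $\ip{y_i}{y_j}$ appears as $\E[h_i h_j]$ for a jointly Gaussian family $(h_i)$ with $\E[h_i^2]\le 1$.

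Concretely I would argue as follows. Embed the $y_i$ isometrically and pick a Gaussian process $(h_i)_{i=1}^n$ with covariance $\ip{y_i}{y_j}$, so $\E[h_i^2]=|y_i|_2^2\le 1$. Then
\begin{equation*}
    \Bigl|\sum_i \e_i\, x_i\otimes y_i\Bigr|_2^2 = \sum_{i,j}\e_i\e_j\ip{x_i}{x_j}\,\E[h_ih_j] = \E\Bigl|\sum_i \e_i h_i x_i\Bigr|_2^2.
\end{equation*}
Now for fixed $\e$, condition on the realization of $(h_i)$: the vector $\sum_i \e_i h_i x_i$ has the same distribution (in the randomness of signs) as $\sum_i \sgn(h_i)|h_i|\e_i x_i$, and by contraction/comparison for Rademacher sums against the symmetric multipliers $|h_i|$ one wants to pull the scalars out. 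The decisive estimate is that for each fixed sign pattern $\e$,
\begin{equation*}
    \E\Bigl|\sum_i \e_i h_i x_i\Bigr|_2 \le \sqrt{\E\Bigl|\sum_i \e_i h_i x_i\Bigr|_2^2},
\end{equation*}
and then one must convert $\E\bigl|\sum_i \e_i h_i x_i\bigr|_2^2 = \sum_{ij}\e_i\e_j\ip{x_i}{x_j}\E[h_ih_j]$ back into something $\le \frac{\pi}{2}\rho(x_1,\dotsc,x_n)^2$. This is exactly where the constant $\frac{\pi}{2}$ enters: writing $h_i = g_i$ standard Gaussians (after reducing to $|y_i|_2\le 1$, which only helps), one uses $\E|g| = \sqrt{2/\pi}$ together with the fact that a Gaussian sum $\sum_i g_i v_i$ can be compared to a Rademacher sum via $\E_g\bigl|\sum g_i v_i\bigr|_2 = \sqrt{\pi/2}\,\E_g\bigl| \cdot\bigr|$-type rescaling, or more directly: $\sum_i g_i x_i \overset{d}{=} r\cdot w$ is not literally a Rademacher sum, so instead one bounds $\bigl(\E_g|\sum_i g_i x_i|_2^2\bigr)^{1/2} = |(\ip{x_i}{x_j})|_{S,?}$... — I would instead invoke the known Rietz bound directly in the form: for any Hilbert-space vectors $x_i$ and any contractions $y_i$,
\begin{equation*}
    \max_{\e}\Bigl|\sum_i \e_i x_i\otimes y_i\Bigr|_2^2 \le \frac{\pi}{2}\max_{\e}\Bigl|\sum_i \e_i x_i\Bigr|_2^2,
\end{equation*}
whose proof is: represent $\ip{y_i}{y_j}=\E[\sgn(g_i)\sgn(g_j)]\cdot(\text{correction})$ is not exact, so one uses the expansion $\ip{y_i}{y_j} = \sum_{k\ge 0} c_k\,\E[\,(\text{sign}\ \cdot)\,]$ with the crucial point that the arcsin kernel relation gives $\E[\sgn(\ip{g}{a})\sgn(\ip{g}{b})] = \tfrac{2}{\pi}\arcsin\ip{a}{b}$, and conversely $t = \sin(\tfrac{\pi}{2}s)$ expands with nonnegative coefficients in the "sign" basis up to the leading $\tfrac{\pi}{2}$ factor.

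The cleanest self-contained path, and the one I would actually write out, is this: reduce to $|y_i|_2 = 1$ by padding with extra orthogonal coordinates (this does not change $\ip{y_i}{y_j}$ for $i\ne j$ and can only... — actually it changes the diagonal, so instead just observe the inequality is monotone and assume $|y_i|_2\le 1$, extend to $|y_i|_2=1$ by adding an orthogonal unit piece $e_i$ scaled, which only increases the left side if... one must be careful). Then for unit vectors $y_i$, use the Gaussian realization $\ip{y_i}{y_j} = \E\bigl[\,\varphi(G_i)\varphi(G_j)\,\bigr]$ where $(G_i)$ are jointly Gaussian with covariance $\tfrac{2}{\pi}\arcsin\ip{y_i}{y_j}$ and $\varphi = \sgn$, valid precisely because the arcsin law inverts; conclude
\begin{equation*}
    \Bigl|\sum_i \e_i x_i\otimes y_i\Bigr|_2^2 = \E\Bigl|\sum_i \e_i\,\varphi(G_i)\,x_i\Bigr|_2^2 = \E\Bigl|\sum_i \e_i'(\omega)\,x_i\Bigr|_2^2 \le \max_{\e'}\Bigl|\sum_i \e_i' x_i\Bigr|_2^2,
\end{equation*}
since $\varphi(G_i)\in\{\pm1\}$ — \textbf{this already gives the bound with constant $1$ for unit $y_i$!} The factor $\sqrt{\pi/2}$ must therefore come from the reduction step when $|y_i|_2 \le 1$ rather than $=1$: padding a subunit $y_i$ up to a unit vector forces a correlation-scaling by a factor, and chasing the arcsin/sin bound $\sin(\tfrac{\pi}{2}t)\le \tfrac{\pi}{2}t$ on $[0,1]$ is where $\tfrac{\pi}{2}$ appears inside a square, yielding $\sqrt{\pi/2}$ overall. \textbf{The main obstacle} I anticipate is getting this reduction from contractive to unit $y_i$ exactly right — in particular, justifying that replacing $\ip{y_i}{y_j}$ by $\sin(\tfrac{\pi}{2}\cdot\tfrac{2}{\pi}\arcsin(\cdot))$-type expressions is legitimate and that the resulting loss is bounded by $\tfrac{\pi}{2}$ in the square, i.e.\ controlling the positive-definiteness of the modified Gram matrix (a Schur-product/GNS argument) and tracking the constant through. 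Everything else is Cauchy–Schwarz, the variance identity $\bigl|\sum\e_i z_i\bigr|_2^2 = \sum_{ij}\e_i\e_j\ip{z_i}{z_j}$, and the elementary inequality $\sin(\tfrac{\pi}{2}t)\le \tfrac{\pi}{2}t$.
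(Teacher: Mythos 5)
Your opening identity $\bigl|\sum_i \e_i\, x_i\otimes y_i\bigr|_2^2=\sum_{i,j}\e_i\e_j\ip{x_i}{x_j}\ip{y_i}{y_j}$ is the right starting point, and the inequality you at one point propose to ``invoke as the known Rietz bound'' is exactly the lemma itself; the paper's proof consists precisely of making that invocation legitimate: for the positive semidefinite Gram matrix $A_{ij}=\ip{x_i}{x_j}$ one checks via Cauchy--Schwarz that $\|A\|_{\infty\to 1}=\rho(x_1,\dotsc,x_n)^2$, and then Rietz's theorem gives $\bigl|\sum_{i,j}A_{ij}\ip{y_i}{y_j}\bigr|\le\tfrac{\pi}{2}\|A\|_{\infty\to 1}$, which is the claim. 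Had you stopped there and cited Rietz (or Nesterov's $\pi/2$ theorem) as a black box, your argument would essentially coincide with the paper's. The problem is the proof you then sketch for that bound: the claimed exact realization $\ip{y_i}{y_j}=\E[\sgn(G_i)\sgn(G_j)]$ by ``inverting the arcsin law'' is not available. To have $\E[\sgn(G_i)\sgn(G_j)]=\ip{y_i}{y_j}$ you would need a jointly Gaussian family whose covariance matrix is $\bigl(\sin(\tfrac{\pi}{2}\ip{y_i}{y_j})\bigr)_{i,j}$, and this matrix need not be positive semidefinite, because the Taylor coefficients of $\sin$ alternate in sign (this is exactly the obstruction that forces Krivine's argument to pass to $\sinh^{-1}$). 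With the covariance you actually wrote, $\tfrac{2}{\pi}\arcsin\ip{y_i}{y_j}$, one gets $\E[\sgn(G_i)\sgn(G_j)]=\tfrac{2}{\pi}\arcsin\bigl(\tfrac{2}{\pi}\arcsin\ip{y_i}{y_j}\bigr)\neq\ip{y_i}{y_j}$.

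The conclusion you draw from that step --- that the bound holds with constant $1$ when the $y_i$ are unit vectors --- is in fact false: the real positive semidefinite (``little'') Grothendieck constant equals $\pi/2$, so there are PSD matrices $A$ and unit vectors $y_i$ with $\sum_{i,j}A_{ij}\ip{y_i}{y_j}$ arbitrarily close to $\tfrac{\pi}{2}\|A\|_{\infty\to 1}$. Consequently your diagnosis that the $\sqrt{\pi/2}$ must come from the reduction from contractive to unit $y_i$ is also off: that reduction is the trivial part (replace $y_i$ by $y_i\oplus(1-|y_i|_2^2)^{1/2}e_i$ with $e_1,\dotsc,e_n$ orthonormal in extra coordinates, which leaves $\ip{y_i}{y_j}$ unchanged for $i\neq j$ and only increases the nonnegative diagonal contributions $A_{ii}|y_i|_2^2$; alternatively, the quantity is convex in each $y_i$, so the supremum over the ball is attained at unit vectors). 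The factor $\pi/2$ is needed already for unit vectors and comes from the Gaussian rounding analysis inside Rietz's theorem, which your sketch does not reproduce. So the proposal as written has a genuine gap at its central step; the repair is either to carry out Rietz's argument honestly or, as the paper does, to cite it and reduce the lemma to it via the identity $\|A\|_{\infty\to 1}=\rho(x_1,\dotsc,x_n)^2$.
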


\begin{proof}
Consider the positive semidefinite matrix $A_{ij} = \ip{x_i}{x_j}$. By the Cauchy-Schwarz inequality we have for any vectors $x,y\in \bb R^n$ that 
\begin{equation*}
        \left|\ip{Ax}{y}\right| = \left|\ip{A^{1/2}x}{A^{1/2}y}\right|\leq |A^{1/2}x|_2\, |A^{1/2}y|_2
\end{equation*}
with equality if and only if $A^{1/2}x = A^{1/2}y$. It follows that 
\begin{equation}\label{eq:infty-1-psd}
    \|A\|_{\infty\to 1} = \sup_{\e\in \{\pm 1\}^n} \left|\sum_{i,j=1}^n A_{ij}\e_i\e_j\right| = \sup_{\e\in \{\pm 1\}^n} \biggl|\sum_{i=1}^n \e_ix_i\biggr|_2^2.
\end{equation}
Given $y_1,\dotsc,y_n\in H$ unit vectors we have by \cite[Theorem 4]{Rietz1974} that 
\begin{equation*}
    \left|\sum_{i=1}^n x_i \otimes y_i\right|_2^2 = \left|\sum_{i,j=1}^n A_{ij}\ip{y_i}{y_j}\right|\leq \frac{\pi}{2} \|A\|_{\infty\to 1}
\end{equation*}
which establishes the result. \qedhere  
\end{proof}

We are now prepared to prove the main result of this section.

\begin{proof}[Proof of Proposition \ref{prop:gr-schur}]
By Proposition \ref{prop:lee11}, let $x = \{x_1,\dotsc, x_n\}$ and $y = \{y_1,\dotsc, y_n\}$ be orthogonal sets in $\ell_2$ with $\sum_{i=1}^n |x_i|_2^2 = \g_2^*(A) = \sum_{j=1}^n |y_j|_2^2$ so that $A_{ij} = \ip{x_i}{y_j}$. We define maps $\tilde x, \tilde y: \{\pm 1\}^n\to H$ by
\begin{equation*}
    \tilde x(\e) = \sum_{i=1}^n \e_i x_i,\quad\ \tilde y(\eta) = \sum_{j=1}^n \eta_j y_j.
\end{equation*}
Since $\cc F(A)_{\e\eta} = \ip{\tilde x(\e)}{\tilde y(\eta)}$ for all $\e,\eta\in \{\pm 1\}^n$ we have by orthogonality that $|\tilde x(\e)|_2 |\tilde y(\eta)|_2 = \g_2^*(A)$, hence by Lemma \ref{prop:FA-factorize} that  
\[\g_2(\cc F(A)) \leq \rho(x_1,\dotsc,x_n)\rho(y_1,\dotsc,y_n) =  \g_2^*(A).\]

For the second inequality, suppose that $x,y: \{\pm 1\}^n\to H$ are such that $\cc F(A)_{\e\eta} = \ip{x(\e)}{y(\eta)}$ with \[\max_{\e} |x(\e)|_2^2 = \max_\eta |y(\eta)|_2^2 = \g_2(\cc F(A)).\]

Let $x_1, \dotsc, x_n, y_1, \dotsc, y_n$ be defined as in equation line (\ref{eq:x-i}), so that $A_{ij} = \ip{x_i}{y_j}$ and $\max_i|x_i|_2, \max_j |y_j|\leq \g_2(\cc F(A))$. Let $w_1,\dotsc,w_n,z_1,\dotsc,z_n\in \ell_2$ be unit vectors. By (\ref{eq:rho-x-i}) and Lemma \ref{lem:psd-grothendieck} we have that
\begin{equation*}
    \left|\sum_{i=1}^n x_i\otimes w_i\right|_2\cdot\left|\sum_{i=1}^n y_i\otimes z_i\right|_2\leq \frac{\pi}{2}\,\g_2(\cc F(A)). 
\end{equation*}
Thus, by the Cauchy--Schwarz inequality, we have that 
\begin{equation*}
    \left|\sum_{ij} A_{ij}\ip{w_i}{z_j}\right| = \left|\left\langle \sum_{i=1}^n x_i\otimes w_i, \sum_{j=1}^n y_j\otimes z_j\right\rangle\right | \leq \frac{\pi}{2}\,\g_2(\cc F(A)).
\end{equation*}
which establishes the inequality. \qedhere
\end{proof}

\begin{question}
    If $K = O(n)$ elements $\e_1,\dots,\e_K$ are sampled from $\{\pm 1\}^n$ independently does the Schur norm of the $K\times K$ matrix $[\ip{A\e_i}{\e_j}]$ well approximate $\|A\|_{\infty\to 1}$ with a high degree of probability?
\end{question}



\section{On a Noncommutative Grothendieck-Type Norm}

Let $\Phi: M_n\to M_n$ be a linear map. We define the \emph{conjugate} to be $\Phi^*(X) : = \Phi(X^*)^*$ and the \emph{adjoint} $\Phi^\dagger$ to be the defined by the functional equation $\tr(\Phi(X)Y) = \tr(X\Phi^\dagger(Y))$ for all $X,Y\in M_n$. 
We say that $\Phi$ is \emph{completely positive} if there are matrices $A_1,\dotsc,A_k\in M_n$ so that $\Phi(X) = \sum_{i=1}^k A_iXA_i^*$ for all $X\in M_n$.
For every $\Phi: M_n\to M_n$, we define the \emph{Choi matrix} 
\begin{equation*}
    \ch(\Phi) := \sum_{i,j} E_{ij}\otimes \Phi(E_{ij})\in M_n\otimes M_n.
\end{equation*}
Note that $\ch(\Phi^*) = \ch(\Phi)^*$ and $\ch(\Phi^\dagger) = \theta(\ch(\Phi))$ where $\theta:M_n\otimes M_n\to M_n\otimes M_n$ is the tensor flip $\theta(x\otimes y) = y\otimes x$. By Choi's theorem \cite[Theorem 3.14]{Paulsen2002} we have that $\Phi$ is completely positive if and only if $\ch(\Phi)$ is positive semidefinite. 

\begin{notation}
    For two linear operators $\Phi,\Psi: M_n\to M_n$, we write $\Phi \succeq_{cp} \Psi$ to mean that $\Phi-\Psi$ is completely positive. 
\end{notation}

An important class of linear operators on $M_n$ are the \emph{Schur multipliers}. As noted above, these are maps of the form $\de_A(X) = A\circ X$ for some $A\in M_n$. We have that $\de_A$ is completely positive if and only if $A$ is positive semidefinite \cite[Theorem 3.7]{Paulsen2002}. Further, we have that $\ch(\de_A) = \sum_{ij} A_{ij} E_{ij}\otimes E_{ij}$. It is easy to check that the map
\[\Delta_n : \sum_{ij} A_{ij} E_{ij} \to \sum_{ij} A_{ij} E_{ij}\otimes E_{ij}\]
yields a (non-unital) adjoint-preserving algebra embedding of $M_n$ into $M_n\otimes M_n$. Thus, while the Schur multipliers form a maximal abelian subalgebra of the linear operators on $M_n$, their corresponding set of Choi matrices is highly noncommutative.

The map $\Delta_n$ has a nicely behaved adjoint. To wit:

\begin{lem}\label{Delta-adjoint}
    For $\Phi: M_n\to M_n$ define $\cc E_n(\Phi)\in M_n$ by
    \begin{equation}
        \cc E_n(\Phi)_{ij} := \Phi(E_{ij})_{ij}.
    \end{equation}
    It holds that $\cc E_n(\Phi)\succeq \cc E_n(\Psi)$ if $\Phi\succeq_{cp} \Psi$. Moreover, $\cc E_n(\delta_A) = A$.
\end{lem}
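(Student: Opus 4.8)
The plan is to reduce the order statement to a single positivity claim and then read that off from a Kraus decomposition. Since $\Phi\mapsto\cc E_n(\Phi)$ is visibly linear and $\Phi\succeq_{cp}\Psi$ means exactly that $\Phi-\Psi$ is completely positive, it suffices to prove the implication: if $\Phi$ is completely positive, then $\cc E_n(\Phi)\succeq 0$. The assertion $\cc E_n(\delta_A)=A$ will then be a one-line direct check.

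To prove the implication, write $\Phi(X)=\sum_{k=1}^m B_kXB_k^*$ for some $B_1,\dotsc,B_m\in M_n$. The one computation to carry out is that of the $(i,j)$-entry of $B_kE_{ij}B_k^*$: since $(E_{ij})_{pq}=\delta_{pi}\delta_{qj}$, one gets $(B_kE_{ij}B_k^*)_{ij}=(B_k)_{ii}\,\overline{(B_k)_{jj}}$, and therefore
\begin{equation*}
    \cc E_n(\Phi)_{ij}=\Phi(E_{ij})_{ij}=\sum_{k=1}^m (B_k)_{ii}\,\overline{(B_k)_{jj}}.
\end{equation*}
Writing $d_k\in\bb C^n$ for the vector of diagonal entries of $B_k$, this says $\cc E_n(\Phi)=\sum_{k=1}^m d_kd_k^*$, which is manifestly positive semidefinite. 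Alternatively, one can route this through the Choi picture mentioned in the preceding remark: $\cc E_n$ is the Hilbert--Schmidt adjoint of $\Delta_n$ under $\ch$, and since $\Delta_n(vv^*)=\ch(\delta_{vv^*})\succeq 0$ — because $vv^*\succeq 0$ forces $\delta_{vv^*}$ to be completely positive by Choi's theorem — and $\ch(\Phi)\succeq 0$, the pairing $\langle\ch(\Phi),\Delta_n(vv^*)\rangle$, which computes $v^*\cc E_n(\Phi)v$ up to complex conjugation of $v$, is nonnegative, being the trace of a product of two positive semidefinite matrices.

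Finally, $\cc E_n(\delta_A)_{ij}=\delta_A(E_{ij})_{ij}=(A\circ E_{ij})_{ij}=A_{ij}\,(E_{ij})_{ij}=A_{ij}$, so $\cc E_n(\delta_A)=A$. I do not expect any genuine obstacle here: the content is entirely in unwinding the Kraus form, and the only place calling for attention is the index bookkeeping in $(B_kE_{ij}B_k^*)_{ij}$ together with the observation that it isolates exactly the product of the diagonal entries of the Kraus operators.
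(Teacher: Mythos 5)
Your argument is correct, but it takes a genuinely different route from the paper. You work directly from the paper's definition of complete positivity as the existence of a Kraus form $\Phi(X)=\sum_k B_kXB_k^*$, compute $(B_kE_{ij}B_k^*)_{ij}=(B_k)_{ii}\overline{(B_k)_{jj}}$ (which checks out), and exhibit $\cc E_n(\Phi)=\sum_k d_kd_k^*$ as a sum of rank-one positives; together with linearity of $\cc E_n$ this gives the order statement, and the computation $\cc E_n(\delta_A)=A$ is the same routine check the paper leaves to the reader. The paper instead argues entirely in the Choi picture: it uses Choi's theorem to replace complete positivity by $\ch(\Phi)\succeq 0$, notes that $\Delta_n(B)\succeq 0$ for $B\succeq 0$ since $\Delta_n$ is a $\ast$-embedding, and then verifies $\tr(\cc E_n(\Phi)B)\geq 0$ for every positive semidefinite $B$ via a trace pairing of positive matrices, i.e., it tests $\cc E_n(\Phi)$ against the positive cone rather than decomposing it. Your main route is more elementary (no Choi's theorem, no duality step) and even more concrete, since it identifies $\cc E_n(\Phi)$ explicitly as the Gram-type matrix of the diagonals of the Kraus operators; the paper's route has the advantage of displaying $\cc E_n$ as the adjoint of $\Delta_n$ at the level of Choi matrices, which is exactly the structural point the surrounding discussion (and the later proof of Proposition \ref{prop:cc-schatten1}) relies on. Your secondary sketch via $\langle\ch(\Phi),\Delta_n(vv^*)\rangle$ is essentially the paper's argument specialized to rank-one $B=vv^*$, and is fine modulo the inner-product convention you already flag.
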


\begin{proof}
    Since $\Delta_n$ induces a $\ast$-embedding of $M_n$ into $M_n\otimes M_n$, we have that $\Delta(B) = \sum_{ij} B_{ij}E_{ij}\otimes E_{ij}$ is positive semidefinite for all $B\in M_n$ positive semidefinite. 
    
    Let $\Phi$ be completely positive, i.e., $\ch(\Phi)$ is positive semidefinite. Set $A = \cc E_n(\Phi)$. We see that $0\leq \tr((\ch(\Phi)\circ \Delta_n(B))\Delta_n(J_n)) = \tr(AB)$ for all $B\in M_n$ positive semidefinite; thus, $A$ is positive semidefinite.  The second assertion is a routine computation and is left to the reader. \qedhere
\end{proof}

\begin{remark}
    It can be seen, for instance from the following lemma and (\ref{eq:g2dual-sdp}), that for a (real) matrix $A$, $\g_2^*(A)\geq \|A\|_{S,1}$. Moreover, if $A$ is diagonal, then $\g_2^*(A) = \|A\|_{S,1}$. However, it is not the case that that there is an effective bound of the form $C\|A\|_{S,1}\geq \|A\|_{\infty\to 1}$ where $C$ is independent of dimension. 

For $\Phi: M_n\to M_n$ we define 
\[\|\Phi\|_{S,\infty\to S,1} := \sup_{\|X\|\cdot\|Y\|\leq 1} \left|\tr(\Phi(X)Y)\right| = \sup_{\|X\|\leq 1} \|\Phi(X)\|_1.\]
(This is normalized so that $\|\id: M_n\to M_n\|_{S,\infty\to S,1} = n$ in comparison to $\|I_n\|_{\infty\to 1} = n$.) Our point here is to observe that in the noncommutative context an analog of the $(S,1)$-norm does indeed bound the $(S,\infty\to S,1)$-norm. We begin with a lemma which is probably well-known to experts (see \cite[Corollary 4.3]{Mathias1993}): we provide a proof for the reader's convenience.
\end{remark}

\begin{lem}\label{lem:1-norm-sdp}
   For $A\in M_n$ we have that $\|A\|_{S,1}$ is the infimum of $\tr(X+Y)/2$ so that $\begin{pmatrix} X & A\\ A^* & Y\end{pmatrix}$ is positive semidefinite. 
\end{lem}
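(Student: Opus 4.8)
The plan is to prove that $\|A\|_{S,1} = \inf\{\tr(X+Y)/2 : \bigl(\begin{smallmatrix} X & A \\ A^* & Y\end{smallmatrix}\bigr)\succeq 0\}$ by establishing the two inequalities separately, the first via an explicit construction from the singular value decomposition and the second via a trace-duality argument. Write $A = U\Sigma V^*$ with $\Sigma = \operatorname{diag}(s_1,\dotsc,s_n)$ the singular values. For the upper bound on the infimum, I would take $X = U\Sigma U^*$ and $Y = V\Sigma V^*$; then $\tr(X) = \tr(Y) = \sum_k s_k = \|A\|_{S,1}$, so $\tr(X+Y)/2 = \|A\|_{S,1}$, and the block matrix factors as
\begin{equation*}
    \begin{pmatrix} X & A \\ A^* & Y\end{pmatrix} = \begin{pmatrix} U\Sigma^{1/2} \\ V\Sigma^{1/2}\end{pmatrix}\begin{pmatrix} \Sigma^{1/2}U^* & \Sigma^{1/2}V^*\end{pmatrix}\succeq 0,
\end{equation*}
so this choice is feasible and the infimum is at most $\|A\|_{S,1}$.

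For the reverse inequality, suppose $\bigl(\begin{smallmatrix} X & A \\ A^* & Y\end{smallmatrix}\bigr)\succeq 0$. The key fact I would invoke is the duality $\|A\|_{S,1} = \sup\{|\tr(W^*A)| : \|W\|\leq 1\} = \sup\{\operatorname{Re}\tr(W^*A) : \|W\| \le 1\}$ together with the observation that for any contraction $W$ the block matrix $\bigl(\begin{smallmatrix} I & W \\ W^* & I\end{smallmatrix}\bigr)$ is positive semidefinite (since $\|W\|\le 1$). Then, pairing the two positive semidefinite block matrices via the (nonnegative) trace of their product, one gets
\begin{equation*}
    0 \le \tr\!\left[\begin{pmatrix} X & A \\ A^* & Y\end{pmatrix}\begin{pmatrix} I & -W \\ -W^* & I\end{pmatrix}\right] = \tr(X) + \tr(Y) - \tr(W^*A) - \tr(A^*W) = \tr(X+Y) - 2\operatorname{Re}\tr(W^*A).
\end{equation*}
Taking the supremum over contractions $W$ gives $\tr(X+Y) \ge 2\|A\|_{S,1}$, i.e. $\tr(X+Y)/2 \ge \|A\|_{S,1}$, as desired. (One should note $\bigl(\begin{smallmatrix} I & -W \\ -W^* & I\end{smallmatrix}\bigr)\succeq 0$ since $-W$ is also a contraction.)

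I expect the main obstacle to be purely expository: making sure the trace-pairing inequality $\tr(PQ)\ge 0$ for $P,Q\succeq 0$ is correctly applied (it uses $\tr(PQ) = \tr(P^{1/2}QP^{1/2})\ge 0$), and checking that the supremum over contractions $W$ is actually attained or approached—which it is, by taking $W$ from the polar decomposition of $A$, namely $W = UV^*$ in the notation above, giving $\operatorname{Re}\tr(W^*A) = \tr(\Sigma) = \|A\|_{S,1}$. There is no real analytic difficulty; the only thing to be careful about is the real-versus-complex bookkeeping (the $\operatorname{Re}$ is harmless and can be dropped by replacing $W$ with a phase-rotated copy), and the fact that in the real symmetric setting relevant to the rest of the paper one may simply work with real $W$. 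This also makes transparent, via the preceding remark, that $\g_2^*(A)\ge\|A\|_{S,1}$: comparing the program (\ref{eq:g2dual-sdp}) for $\g_2^*$ with the one here, the former restricts $X,Y$ to be diagonal, so its optimal value can only be larger.
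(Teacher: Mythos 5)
Your proof is correct, and the feasibility half is the same as the paper's: writing $A=U\Sigma V^*$ and taking $X=U\Sigma U^*$, $Y=V\Sigma V^*$ is exactly the paper's construction (there written with $A=U^*DV$ and blocks $U^*DU$, $V^*DV$), so the infimum is at most $\|A\|_{S,1}$. Where you genuinely diverge is the lower bound. The paper conjugates a feasible block matrix by the block-diagonal unitary $\operatorname{diag}(U,V)$ to reduce to the case where the off-diagonal block is the diagonal matrix $D$ of singular values, and then reads off $2D_{ii}\leq X'_{ii}+Y'_{ii}$ from the $2\times 2$ principal minors, summing to get $\tr(X+Y)\geq 2\tr(D)$. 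You instead use the duality $\|A\|_{S,1}=\sup_{\|W\|\leq 1}\operatorname{Re}\tr(W^*A)$ and pair the feasible matrix against $\bigl(\begin{smallmatrix} I & -W\\ -W^* & I\end{smallmatrix}\bigr)\succeq 0$ via $\tr(PQ)\geq 0$ for $P,Q\succeq 0$; the computation and the attainment at $W=UV^*$ are both right. Each route has its merits: the paper's is more elementary (only $2\times 2$ minors, no appeal to trace-norm duality), while yours is the standard SDP-duality picture, exhibits the program explicitly as dual to maximizing $\operatorname{Re}\tr(W^*A)$ over contractions, and makes immediate the comparison you note with (\ref{eq:g2dual-sdp}), where constraining $X,Y$ to be diagonal can only increase the optimal value, giving $\g_2^*(A)\geq \|A\|_{S,1}$.
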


\begin{proof}
    By singular value decomposition, there are unitary matrices $U, V$ and a diagonal matrix $D$ with non-negative entries so that $A = U^* DV$, where $\|A\|_{S,1} = \tr(D)$. If follows that \[\begin{pmatrix}
    U^* D U & A \\ A^* & V^* D V
    \end{pmatrix}\]
    is positive semi-definite, hence $\|A\|_{S,1}$ is an upper bound.
    
    On the other hand, since the trace is invariant under conjugation by orthogonal matrices we have by the same reasoning as above that the quantity is equal to the infimum of $\tr(X' + Y')/2$ so that $\begin{pmatrix} X' & D\\ D & Y'\end{pmatrix}$ is positive semidefinite. This shows that $2D_{ii}\leq X_{ii} + Y_{ii}$, which shows that $\|A\|_{S,1}$ is a lower bound. \qedhere
    
    
\end{proof}

For linear transformations $\Phi_{ij}: M_n\to M_n$, $i,j=1,2$, we write $\Phi = \begin{pmatrix} \Phi_{11} & \Phi_{12}\\ \Phi_{21} & \Phi_{22}\end{pmatrix}$ to be the linear transformation $\Phi: M_{2n}\to M_{2n}$ given by \[\Phi\begin{pmatrix} A & B\\ C & D\end{pmatrix} = \begin{pmatrix} \Phi_{11}(A) & \Phi_{12}(B)\\ \Phi_{21}(C) & \Phi_{22}(D)\end{pmatrix}.\] Notice that \[\ch(\Phi) = \begin{pmatrix} \ch(\Phi_{11}) & \ch(\Phi_{12})\\ \ch(\Phi_{21}) & \ch(\Phi_{22})\end{pmatrix}.\]

\begin{defn}
    Suppose that $\Phi: M_n\to M_n$ is linear. We define the \emph{complete Schatten 1-norm} $\|\Phi\|_{CS_1}$ to be given by the linear program
    \begin{equation}\label{cc-norm}
        \begin{aligned}
            & \text{minimize} && \frac{1}{2}\sum_{i=1}^n \tr(\Psi(E_{ii}) + \Psi'(E_{ii}))\\
            & \text{subject to} && \begin{pmatrix} \Psi & \Phi\\ \Phi^* & \Psi'
            \end{pmatrix}\succeq_{cp} 0.
        \end{aligned}
    \end{equation}
\end{defn}

We observe that the program (\ref{cc-norm}) is equivalent to the following semidefinite program.
    \begin{equation}\label{cc-norm-sdp}
        \begin{aligned}
            & \text{minimize} && \frac{1}{2}(\tr(\ch(\Psi))+\tr(\ch(\Psi')))\\
            & \text{subject to} && \begin{pmatrix} \ch(\Psi) & \ch(\Phi)\\ \ch(\Phi)^* & \ch(\Psi')
            \end{pmatrix}\succeq 0.
        \end{aligned}
    \end{equation}
    
Hence by Lemma \ref{lem:1-norm-sdp} we have the following.

\begin{cor} For $\Phi: M_n\to M_n$ linear we have that $\|\Phi\|_{CS,1} = \|\ch(\Phi)\|_{S,1}$.
\end{cor}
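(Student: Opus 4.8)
The plan is to deduce the corollary directly from Lemma~\ref{lem:1-norm-sdp} by identifying the completely positive program (\ref{cc-norm}) with the semidefinite program (\ref{cc-norm-sdp}) and then recognizing the latter, after passing through $\ch$, as precisely the program computing $\|\ch(\Phi)\|_{S,1}$. The only ingredients needed beyond Lemma~\ref{lem:1-norm-sdp} are Choi's theorem and the two facts already recorded above: $\ch(\Phi^*) = \ch(\Phi)^*$, and that the Choi matrix of a block map $M_{2n}\to M_{2n}$ is the block matrix of the Choi matrices of its four entries.

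First I would check that the objective functions agree. For any linear $\Psi: M_n\to M_n$, since $\ch(\Psi) = \sum_{i,j} E_{ij}\otimes\Psi(E_{ij})$ and $\tr(E_{ij}) = \delta_{ij}$, we get $\tr(\ch(\Psi)) = \sum_{i,j}\tr(E_{ij})\tr(\Psi(E_{ij})) = \sum_{i=1}^n\tr(\Psi(E_{ii}))$; hence the objective $\tfrac12\sum_i\tr(\Psi(E_{ii}) + \Psi'(E_{ii}))$ of (\ref{cc-norm}) equals $\tfrac12(\tr(\ch(\Psi)) + \tr(\ch(\Psi')))$, the objective of (\ref{cc-norm-sdp}). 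Next I would translate the constraint: the block map $\begin{pmatrix}\Psi & \Phi\\ \Phi^* & \Psi'\end{pmatrix}: M_{2n}\to M_{2n}$ has Choi matrix $\begin{pmatrix}\ch(\Psi) & \ch(\Phi)\\ \ch(\Phi)^* & \ch(\Psi')\end{pmatrix}$, using the block formula for Choi matrices together with $\ch(\Phi^*) = \ch(\Phi)^*$. By Choi's theorem this block map is completely positive --- that is, the constraint of (\ref{cc-norm}) holds --- if and only if that $2n^2\times 2n^2$ matrix is positive semidefinite, which is the constraint of (\ref{cc-norm-sdp}). Combined with the objective computation, this is exactly the equivalence of (\ref{cc-norm}) and (\ref{cc-norm-sdp}) asserted in the text.

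To conclude, I would use that $\ch$ is a linear bijection from the space of linear maps $M_n\to M_n$ onto $M_n\otimes M_n$: making the substitution $(\Psi,\Psi') = (\ch^{-1}(X), \ch^{-1}(Y))$ shows that $\|\Phi\|_{CS,1}$ is the infimum of $\tfrac12(\tr X + \tr Y)$ over all $X, Y\in M_n\otimes M_n$ with $\begin{pmatrix} X & \ch(\Phi)\\ \ch(\Phi)^* & Y\end{pmatrix}\succeq 0$. By Lemma~\ref{lem:1-norm-sdp} applied to the matrix $A = \ch(\Phi)\in M_n\otimes M_n$, this infimum equals $\|\ch(\Phi)\|_{S,1}$, which is the claim.

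The argument is essentially bookkeeping around Choi's theorem, so I do not expect a serious obstacle; the step requiring the most care is the constraint translation. One must be sure that complete positivity of a map on $M_{2n}$, in the block sense introduced above, is genuinely equivalent to positive semidefiniteness of the full $2n^2\times 2n^2$ Choi matrix, and that the adjoint block lands in the lower-left position via $\ch(\Phi^*) = \ch(\Phi)^*$. Both are already among the facts assembled in the preceding paragraphs, so once they are in hand the corollary is immediate.
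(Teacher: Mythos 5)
Your proposal is correct and follows essentially the same route as the paper: the text simply observes the equivalence of the program (\ref{cc-norm}) with the semidefinite program (\ref{cc-norm-sdp}) via Choi's theorem and then invokes Lemma \ref{lem:1-norm-sdp} applied to $\ch(\Phi)$, which is exactly what you spell out (with the useful extra details that $\tr(\ch(\Psi)) = \sum_i \tr(\Psi(E_{ii}))$ and that $\ch$ is a linear bijection, so the blocks $\ch(\Psi), \ch(\Psi')$ range over all of $M_n\otimes M_n$).
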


\begin{prop}\label{prop:cc-schatten1}
    For all $\Phi: M_n\to M_n$ linear, we have that $\|\Phi\|_{CS_1} \geq \|\cc E_n(\Phi)\|_{S,1}$. Moreover, we have that $\|\de_A\|_{CS_1} = \|A\|_{S,1}$ for all $A\in M_n$.
\end{prop}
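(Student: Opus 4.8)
The plan is to route everything through the identity $\|\Phi\|_{CS_1} = \|\ch(\Phi)\|_{S,1}$ established in the corollary just above, together with the isometry
\[ P\colon \bb C^n \longrightarrow \bb C^n\otimes\bb C^n, \qquad P e_i = e_i\otimes e_i. \]
First I would record two index computations. On the one hand $\cc E_n(\Phi) = P^*\ch(\Phi)P$: with $\ch(\Phi) = \sum_{ij} E_{ij}\otimes\Phi(E_{ij})$ one checks directly that $\ip{e_a\otimes e_b}{\ch(\Phi)(e_k\otimes e_l)} = \Phi(E_{ak})_{bl}$, so compressing to the diagonal subspace $\Span\{e_i\otimes e_i : 1\le i\le n\}$ leaves exactly the numbers $\Phi(E_{ij})_{ij} = \cc E_n(\Phi)_{ij}$. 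On the other hand $\ch(\de_A) = \sum_{ij} A_{ij} E_{ij}\otimes E_{ij} = P A P^*$; this is precisely the embedding $\Delta_n$ of the previous section, now realized as conjugation by $P$.

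For the first inequality I would use that compression by an isometry is a contraction for the Schatten $1$-norm: by Schatten duality $\|P^*MP\|_{S,1} = \sup_{\|W\|\le 1}|\tr(MPWP^*)|\le \|M\|_{S,1}$, since $\|PWP^*\|\le\|W\|$. Taking $M = \ch(\Phi)$ and invoking the corollary gives
\[ \|\cc E_n(\Phi)\|_{S,1} = \|P^*\ch(\Phi)P\|_{S,1} \ \le\ \|\ch(\Phi)\|_{S,1} = \|\Phi\|_{CS_1}. \]

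For the ``moreover'', since $P^*P = \id$ the matrix $P A P^*$ is just $A$ transplanted into a corner of $M_{n^2}$: indeed $(PAP^*)^*(PAP^*) = P A^*A P^*$ agrees with $A^*A$ on $\operatorname{ran}P$ and vanishes on its orthogonal complement, so $PAP^*$ has the same singular values as $A$ apart from additional zeros, whence $\|PAP^*\|_{S,1} = \|A\|_{S,1}$. Combining with $\ch(\de_A) = PAP^*$ and the corollary,
\[ \|\de_A\|_{CS_1} = \|\ch(\de_A)\|_{S,1} = \|PAP^*\|_{S,1} = \|A\|_{S,1}. \]
(The lower bound $\|\de_A\|_{CS_1}\ge\|A\|_{S,1}$ also follows from the first part, since $\cc E_n(\de_A) = A$ by Lemma \ref{Delta-adjoint}.)

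I do not expect a genuine obstacle here: once the two bookkeeping identities $\cc E_n(\Phi) = P^*\ch(\Phi)P$ and $\ch(\de_A) = PAP^*$ are in hand, the conclusion reduces to the two elementary facts that compression by an isometry does not increase the Schatten $1$-norm and that conjugation by an isometry preserves singular values (together with the corollary). If one prefers to stay inside the completely positive formalism already developed, an alternative route for the first inequality is: take a feasible pair $(\Psi,\Psi')$ for the program (\ref{cc-norm}), apply $\cc E_{2n}$ to the completely positive block map $\begin{pmatrix}\Psi & \Phi\\ \Phi^* & \Psi'\end{pmatrix}$ to obtain, by the dimension-$2n$ instance of Lemma \ref{Delta-adjoint}, the positive semidefinite matrix $\begin{pmatrix}\cc E_n(\Psi) & \cc E_n(\Phi)\\ \cc E_n(\Phi)^* & \cc E_n(\Psi')\end{pmatrix}$, and then apply Lemma \ref{lem:1-norm-sdp}; here the only subtlety is that one needs $\Psi(E_{ii}),\Psi'(E_{ii})\succeq 0$ — which holds because the block map is in particular positive — in order to bound $\tr\cc E_n(\Psi) = \sum_i\Psi(E_{ii})_{ii}$ by the program's objective $\tfrac12\sum_i\tr(\Psi(E_{ii})+\Psi'(E_{ii}))$.
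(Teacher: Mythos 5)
Your proof is correct, and it takes a genuinely different route from the paper's. You reduce everything to the corollary $\|\Phi\|_{CS,1}=\|\ch(\Phi)\|_{S,1}$ together with the observation that, for the isometry $Pe_i=e_i\otimes e_i$, one has $\cc E_n(\Phi)=P^*\ch(\Phi)P$ and $\ch(\de_A)=PAP^*=\Delta_n(A)$; both index computations check out, the inequality then follows because compression by an isometry is contractive for the Schatten $1$-norm, and the Schur-multiplier equality because conjugation by an isometry preserves singular values up to zeros. There is no circularity, since the corollary is obtained from the equivalence of (\ref{cc-norm}) with (\ref{cc-norm-sdp}) and Lemma \ref{lem:1-norm-sdp}, independently of the proposition. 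The paper instead argues directly with feasible points of the program (\ref{cc-norm}): for the inequality it applies the entrywise map $\cc E$ to a feasible block map (essentially your closing $\cc E_{2n}$ remark) to produce a feasible point for Lemma \ref{lem:1-norm-sdp}, bounding $\tr(\cc E_n(\Psi))=\sum_i\Psi(E_{ii})_{ii}\le\tr(\ch(\Psi))$ via entrywise positivity of $\Psi(E_{ii})$; for the equality it converts a feasible pair $(X,Y)$ for the SDP of $\|A\|_{S,1}$ into the feasible pair of Schur multipliers $(\de_X,\de_Y)$, using $\tr(\ch(\de_X))=\tr(X)$. What your route buys is conceptual economy: identifying $\cc E_n$ and $\Delta_n$ as compression and conjugation by the same isometry makes both halves immediate consequences of standard Schatten-norm facts, and in particular gives the slightly more general inequality $\|P^*MP\|_{S,1}\le\|M\|_{S,1}$ for arbitrary $M$. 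The paper's route, by exhibiting explicit feasible points, stays entirely within the completely positive/semidefinite-programming formalism of the section and needs only Lemmas \ref{Delta-adjoint} and \ref{lem:1-norm-sdp} rather than the corollary.
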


\begin{proof}
    Let $\Psi,\Psi'$ satisfy the constraints of (\ref{cc-norm}) with respect to $\Phi$. Setting $A = \cc E_n(\Phi)$, $X = \cc E_n(\Psi)$ and $Y=\cc E_n(\Psi')$, we have by Lemma \ref{Delta-adjoint} that 
    \[\begin{pmatrix} X & A\\ A^* & Y\end{pmatrix} \succeq 0\]
    Since $\ch(\Psi)$ is positive semidefinite, we have that $\Psi(E_{ii})_{jj}\geq 0$ for all $i,j$; hence,
    \[\tr(X) = \sum_{i=1}^n \Psi(E_{ii})_{ii}\leq \sum_{i,j=1}^n \Psi(E_{ii})_{jj} = \tr(\ch(\Psi)).
    \]
    Similarly, $\tr(Y)\leq \tr(\ch(\Psi'))$. Minimizing and applying Lemma \ref{lem:1-norm-sdp}, we obtain that $\|\Phi\|_{CS,1}\geq \|A\|_{S,1}$. 
    
    For the second assertion we need only check that $\|\de_A\|_{CS,1} \leq \|A\|_{S,1}$. Let $X,Y\in M_n$ be such that $\begin{pmatrix} X & A\\ A^* & Y\end{pmatrix}\succeq 0$. It is easy to see that
    \[\begin{pmatrix} \ch(\de_X) & \ch(\de_A)\\ \ch(\de_A^*) & \ch(\de_Y) \end{pmatrix}\succeq 0,\]
    and that $\tr(\ch(\de_X)) = \tr(\Delta_n(X)) = \tr(X)$. Thus, minimizing and applying Lemma \ref{lem:1-norm-sdp} again we obtain that $\|\de_A\|_{CS,1}\leq \|A\|_{S,1}$.
\end{proof}

\begin{prop}
 We have that $\|\Phi\|_{S,\infty\to S,1}\leq \|\Phi\|_{CS,1}$.
\end{prop}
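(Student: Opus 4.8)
The plan is to show that the completely positive semidefinite program defining $\|\Phi\|_{CS,1}$ produces a certificate that bounds $\|\Phi\|_{S,\infty\to S,1}$ directly, exactly paralleling how the scalar SDP of Lemma \ref{lem:1-norm-sdp} relates to $\|A\|_{\infty\to 1}$-type quantities. Concretely, suppose $\Psi,\Psi'\colon M_n\to M_n$ satisfy $\begin{pmatrix}\Psi & \Phi\\ \Phi^* & \Psi'\end{pmatrix}\succeq_{cp} 0$. I would like to extract from this the estimate $|\tr(\Phi(X)Y)|\le \tfrac12\bigl(\text{something bounded by }\tr(\ch(\Psi))\bigr)+\tfrac12\bigl(\text{something bounded by }\tr(\ch(\Psi'))\bigr)$ for every pair of contractions $X,Y\in M_n$, and then minimize.

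The key step is to unpack complete positivity of the $2\times 2$ block map. By Choi's theorem this is equivalent to $\begin{pmatrix}\ch(\Psi)&\ch(\Phi)\\ \ch(\Phi)^*&\ch(\Psi')\end{pmatrix}\succeq 0$ in $M_{2n}\otimes M_n$. The standard consequence of such a block-positivity condition is that for any vectors $\xi,\eta$ one has $|\langle \ch(\Phi)\eta,\xi\rangle|\le \tfrac12\langle\ch(\Psi)\xi,\xi\rangle+\tfrac12\langle\ch(\Psi')\eta,\eta\rangle$, and more generally for any matrices $R,S$ of appropriate size, $|\tr(R^*\ch(\Phi)S)|\le \tfrac12\tr(R^*\ch(\Psi)R)+\tfrac12\tr(S^*\ch(\Psi')S)$. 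The task is then to choose $R,S$ built from the contractions $X,Y$ so that: (i) $\tr(R^*\ch(\Phi)S)$ reproduces $\tr(\Phi(X)Y)$ (or a scalar multiple of it), using the Choi formula $\ch(\Phi)=\sum_{ij}E_{ij}\otimes\Phi(E_{ij})$ and the identity $\tr(\Phi(X)Y)=\sum_{ij}X_{ij}\tr(\Phi(E_{ij})Y)$; and (ii) the diagonal-sum quantities $\tr(R^*\ch(\Psi)R)$ and $\tr(S^*\ch(\Psi')S)$ are controlled by $\tr(\ch(\Psi))=\sum_i\tr(\Psi(E_{ii}))$ and $\tr(\ch(\Psi'))$ respectively whenever $\|X\|,\|Y\|\le 1$. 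The natural candidates are the ``vectorizations'' of $X$ and $Y$: writing $x=\sum_{ij}X_{ij}\,e_i\otimes e_j\in\mathbb{C}^n\otimes\mathbb{C}^n$ and similarly for $Y$, or rather the operator-valued analogues $R=\sum_j e_j e_j^*\otimes(\text{column of }X)$ — I would test $R=\sum_{ij}E_{ij}\otimes(\text{appropriate rank-one block})$ so that contracting against $\ch(\Psi)$ yields $\tr(\Psi(X^*X))\le\|X\|^2\sum_i\tr(\Psi(E_{ii}))$ by complete positivity of $\Psi$ and operator monotonicity ($X^*X\le\|X\|^2 I=\|X\|^2\sum_i E_{ii}$, so $\Psi(X^*X)\le\|X\|^2\Psi(I)$, hence the trace inequality).

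So the concrete chain is: (1) reduce to the SDP form via Choi; (2) apply the block-positivity $2$-by-$2$ inequality with a cleverly chosen pair of rectangular matrices encoding $X$ and $Y$; (3) verify the cross term equals $\tr(\Phi(X)Y)$ up to normalization by direct expansion using $\ch(\Phi)=\sum E_{ij}\otimes\Phi(E_{ij})$; (4) bound the two ``energy'' terms using $X^*X\le\|X\|^2 I$, complete positivity of $\Psi$, and $\tr(\Psi(I))=\tr(\Psi(\sum_i E_{ii}))=\sum_i\tr(\Psi(E_{ii}))=\tr(\ch(\Psi))$, with the analogous computation for $\Psi'$ and $Y$; (5) take the supremum over contractions $X,Y$ and the infimum over feasible $(\Psi,\Psi')$ to conclude $\|\Phi\|_{S,\infty\to S,1}\le\|\Phi\|_{CS,1}$.

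The main obstacle I anticipate is getting the bookkeeping of step (2)–(3) exactly right: one must find the correct rectangular ``factorization'' matrices $R,S$ so that the bilinear form $(R,S)\mapsto\tr(R^*\ch(\Phi)S)$ lands precisely on $\tr(\Phi(X)Y)$, and it is easy to be off by a transpose, a flip $\theta$, or a conjugation when translating between the Choi picture and the map picture (the identity $\tr(\Phi(X)Y)=\tr((\text{vec }Y^\t)^*\ch(\Phi)(\text{vec }X))$ up to conventions is the crux). Once the correct encoding is identified, step (4) should be routine: complete positivity of $\Psi$ gives $\Psi(I)-\Psi(X^*X)=\Psi(I-X^*X)\succeq 0$ since $I-X^*X\succeq0$, so $\tr(\Psi(X^*X))\le\tr(\Psi(I))=\tr(\ch(\Psi))$, and symmetrically for $\Psi'$. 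Averaging and optimizing then yields the claimed inequality, completing the proof.
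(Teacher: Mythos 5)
Your plan is sound and can be completed, but it takes a genuinely different route from the paper: you work in the Choi picture, conjugating the block positive semidefinite matrix of (\ref{cc-norm-sdp}) by matrices built from $X,Y$, whereas the paper never vectorizes at all --- it applies the completely positive block map $\begin{pmatrix} \Psi & \Phi\\ \Phi^* & \Psi'\end{pmatrix}$ to the positive matrices $\begin{pmatrix} |X| & X\\ X^* & |X^*|\end{pmatrix}$ and $\begin{pmatrix} |Y| & Y\\ Y^* & |Y^*|\end{pmatrix}$ coming from polar decomposition, tensors the results, and pairs with $\Sg_n=\sum_{ij}E_{ij}\otimes E_{ij}$ to get $2\tr(\Phi(X)Y)\leq \tr(\Psi(|X|)|Y|)+\tr(\Psi'(|X^*|)|Y^*|)$, which is then bounded by $\tr(\ch(\Psi))+\tr(\ch(\Psi'))$ using $|X|,|X^*|\preceq I_n$. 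The crux you flag in steps (2)--(3) is resolved by the identity $\tr(\Phi(X)Y)=\tr\bigl(\ch(\Phi)\,(X^\t\otimes Y)\bigr)$, obtained by expanding $\ch(\Phi)=\sum_{ij}E_{ij}\otimes\Phi(E_{ij})$; once you have it, your route actually simplifies beyond what you planned: since $\|X^\t\otimes Y\|\leq 1$ for contractions $X,Y$, trace duality gives $|\tr(\Phi(X)Y)|\leq \|\ch(\Phi)\|_{S,1}$, and the corollary to Lemma \ref{lem:1-norm-sdp} identifies $\|\ch(\Phi)\|_{S,1}$ with $\|\Phi\|_{CS,1}$, so no choice of $R,S$ and no use of complete positivity of $\Psi$ is needed. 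If you prefer to keep the $R,S$ formulation, write the polar decomposition $X^\t\otimes Y=W|X^\t\otimes Y|$ and take $S=W|X^\t\otimes Y|^{1/2}$, $R=|X^\t\otimes Y|^{1/2}$; then $\tr(R^*\ch(\Psi)R)=\tr(\ch(\Psi)RR^*)\leq\tr(\ch(\Psi))$ simply because $\ch(\Psi)\succeq 0$ and $RR^*\preceq I$ --- note that your step (4) as written is not quite what arises, since the energy terms are not of the form $\tr(\Psi(X^*X))$, and the monotonicity argument $\Psi(X^*X)\preceq\|X\|^2\Psi(I)$ is unnecessary. In comparing the two approaches: the paper's map-level argument avoids all vectorization and transpose conventions and exhibits the sharper intermediate estimate involving $\Psi(|X|)$ and $\Psi'(|X^*|)$, while your Choi-level argument makes the proposition transparent as an instance of $S{,}1$--$S{,}\infty$ duality, i.e.\ of the containment $\cc C_n\subset\cc B_{n^2}$ discussed in the closing remark of the section.
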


\begin{proof}
    Choose $\Psi: M_n\to M_n$ so that $\begin{pmatrix} \Psi & \Phi\\ \Phi^* & \Psi' \end{pmatrix}$ is completely positive. Let $X,Y\in M_n$. By polar decomposition we see that
    \[\begin{pmatrix} |X| & X\\ X^* & |X^*| \end{pmatrix}\ \text{and}\ \begin{pmatrix} |Y| & Y\\ Y^* & |Y^*| \end{pmatrix}\]
    are positive semidefinite; hence, so is
    \[\begin{pmatrix} \Psi(|X|)\otimes |Y| & \Phi(X)\otimes Y\\ \Phi(X)^*\otimes Y^* & \Psi'(|X^*|)\otimes |Y^*|\end{pmatrix}.\]
    Let $\Sg_n := \sum_{ij} E_{ij}\otimes E_{ij}\in M_n\otimes M_n$, and note that 
    \begin{equation*}
        \tr((X\otimes Y)\Sg_n) = \tr(XY).
    \end{equation*}
    Applying this we conclude that 
    \begin{equation}
        2\tr(\Phi(X)Y)\leq \tr(\Psi(|X|)|Y|) + \tr(\Psi'(|X^*|)|Y^*|).
    \end{equation}
    We can assume that $\|X\|, \|Y\|\leq 1$ so that $|X|, |X^*|\preceq I_n$. Thus,
    \begin{equation}
        \begin{aligned}
            \tr(\Psi(|X|)|Y|) + \tr(\Psi(|X^*|)|Y^*|) &\leq \tr(\Psi(I_n)|Y|) + \tr(\Psi'(I_n)|Y^*|)\\ &\leq \|\Psi(I_n)\|_{S,1} + \|\Psi'(I_n)\|_{S,1}\\
            &= \sum_{i=1}^n \tr(\Psi(E_{ii})) + \tr(\Psi'(E_{ii}))\\
            &= \tr(\ch(\Psi)) + \tr(\ch(\Psi')).
        \end{aligned}
    \end{equation}
    Altogether this shows that $\|\Phi\|_{S,\infty\to S,1}\leq \|\Phi\|_{CS,1}$. \qedhere
\end{proof}

\begin{cor}
    We have that $\|\de_A\|_{S,\infty\to S,1}\leq \|A\|_{S,1}$.
\end{cor}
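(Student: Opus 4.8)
The plan is to chain together two results already established above. The Corollary states that $\|\de_A\|_{S,\infty\to S,1}\leq \|A\|_{S,1}$, and the immediately preceding Proposition gives $\|\Phi\|_{S,\infty\to S,1}\leq \|\Phi\|_{CS,1}$ for every linear $\Phi: M_n\to M_n$. So the first (and essentially only) step is to specialize $\Phi = \de_A$ in that Proposition, yielding
\begin{equation*}
    \|\de_A\|_{S,\infty\to S,1}\leq \|\de_A\|_{CS,1}.
\end{equation*}

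The second step is to invoke the identity $\|\de_A\|_{CS_1} = \|A\|_{S,1}$, which is the ``moreover'' half of Proposition \ref{prop:cc-schatten1}. Combining the two displays gives $\|\de_A\|_{S,\infty\to S,1}\leq \|A\|_{S,1}$, which is exactly the claim.

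There is no real obstacle here: the Corollary is a formal consequence of the two prior statements and requires no new argument beyond substitution. If anything, the only point worth a sentence is making sure the notation is consistent — namely that $\|\de_A\|_{CS_1}$ and $\|\de_A\|_{CS,1}$ refer to the same quantity (the paper uses both subscripts interchangeably), so that the two cited results can be composed without ambiguity. One might also remark in passing that this recovers, as a special case of the noncommutative bound, the \emph{commutative} fact hinted at in the earlier Remark: that on Schur multipliers the Schatten-1 norm of the defining matrix controls the $(S,\infty\to S,1)$-norm, in parallel to how $\|A\|_{S,1}\leq \g_2^*(A)$ compares with $\|A\|_{\infty\to 1}\leq \g_2^*(A)$ in the scalar setting. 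But for the proof itself, a two-line composition of Proposition \ref{prop:cc-schatten1} with the preceding Proposition suffices.
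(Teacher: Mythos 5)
Your proposal is correct and matches the paper's (implicit) argument: the corollary is stated without proof precisely because it follows by specializing $\Phi=\de_A$ in the preceding proposition and invoking the identity $\|\de_A\|_{CS,1}=\|A\|_{S,1}$ from Proposition \ref{prop:cc-schatten1}. Nothing further is needed.
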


\begin{remark} \label{rmk:cs1-bad}
    One could hope that there is a constant $K$ so that for all $n\in\bb N$ and all $\Phi: M_n\to M_n$ we have $\|\Phi\|_{CS,1} \leq K\|\Phi\|_{S,\infty\to S,1}$. In this way $\|\Phi\|_{CS,1}$ would be an effectively computable norm corresponding to the noncommutative Grothendieck inequality due to Pisier \cite{Pisier1978} and Haagerup \cite{Haagerup1985grothendieck}. Unfortunately, this is not the case.
    
    For $A\in M_n$ consider $\Phi_A: M_n\to M_n$ defined by $\Phi_A(E_{ij}) = \de_{ij}\cdot (\sum_j A_{ij} E_{jj})$. It is not hard to check that 
    \[\|\Phi_A\|_{CS,1} = \sum_{i,j} |A_{ij}|\]
    while 
    \[\|\Phi_A\|_{S,\infty\to S,1} = \|A\|_{\infty\to 1}.\]
\end{remark}

\begin{remark}
    For $\Phi: M_n\to M_n$, we define \[\G^*(\Phi) := \sup_k\|\Phi\otimes\id_{M_k}\|_{S,\infty\to S,1}.\] For the case of $\Phi_A$ is in the previous remark, we have that
    \begin{equation}
        \G^*(\Phi_A) = \sup_{\|B_i\|,\|C_j\|\leq 1} \left|\sum_{i,j} A_{ij} \tr(B_iC_j)\right|.
    \end{equation}
    There is a construction using Clifford algebras which for unit vectors $\xi_1,\dotsc,\xi_n,\eta_1,\dotsc,\eta_n\in \bb R^n$ produces contractive matrices $B_1,\dotsc,B_n,C_1,\dotsc,C_n\in M_{2^n}(\bb R)$ so that $\ip{\xi_i}{\eta_j} = \tr(B_iC_j)$: see, for instance, \cite[Section 11.1]{ABMB-book} or \cite{Tsirelson1985}. Thus,
    \begin{prop}
        For $A\in M_n(\bb R)$ and the associated map $\Phi_A: M_n(\bb R)\to M_n(\bb R)$ it holds that \[\G^*(\Phi_A) = \g_2^*(A).\]
    \end{prop}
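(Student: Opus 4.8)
The plan is to combine the reformulation of $\G^*(\Phi_A)$ recorded in the remark immediately preceding the statement with the Clifford-algebra (Tsirelson) construction also quoted there, and to reconcile the two sides by an elementary ``ball versus sphere'' observation about the bilinear form defining $\g_2^*$. By that remark, $\G^*(\Phi_A)$ is the supremum of $\left|\sum_{i,j} A_{ij}\tr(B_iC_j)\right|$ over all $k\in\bb N$ and all contractions $B_1,\dotsc,B_n,C_1,\dotsc,C_n\in M_k(\bb R)$, where $\tr$ is the normalized trace on $M_k$; so it suffices to prove that this supremum equals $\g_2^*(A)$.

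For the inequality $\G^*(\Phi_A)\ge\g_2^*(A)$ I would first note that the supremum defining $\g_2^*(A)$ is attained: it is the supremum of a continuous function over the compact set of $2n$-tuples of unit vectors, and since only $2n$ vectors occur we may take them in $\bb R^{2n}$. Let $\xi_1,\dotsc,\xi_n,\eta_1,\dotsc,\eta_n$ be unit vectors with $\g_2^*(A) = \left|\sum_{i,j} A_{ij}\ip{\xi_i}{\eta_j}\right|$. Feeding these into the construction of \cite[Section 11.1]{ABMB-book} (cf.~\cite{Tsirelson1985}) produces an integer $k$ and real contractions $B_i, C_j\in M_k(\bb R)$ with $\tr(B_iC_j) = \ip{\xi_i}{\eta_j}$ for all $i,j$; then $\G^*(\Phi_A)\ge\left|\sum_{i,j} A_{ij}\tr(B_iC_j)\right| = \g_2^*(A)$.

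For the reverse inequality, fix $k$ and contractions $B_i, C_j\in M_k(\bb R)$ and regard $M_k(\bb R)$ as a real Hilbert space under $\ip{X}{Y} := \tr(X^\t Y)$, the normalized Hilbert--Schmidt inner product. Since $\tr(X^\t X)\le\|X\|^2$, the vectors $x_i := B_i^\t$ and $y_j := C_j$ have norm at most one and satisfy $\ip{x_i}{y_j} = \tr(B_iC_j)$. The form $(x_1,\dotsc,x_n,y_1,\dotsc,y_n)\mapsto\sum_{i,j} A_{ij}\ip{x_i}{y_j}$ is linear in each of its $2n$ arguments, so rescaling any nonzero argument to a unit vector does not decrease its modulus (and the form vanishes if some argument is $0$); hence its supremum over tuples of vectors of norm at most one equals its supremum over tuples of unit vectors, namely $\g_2^*(A)$. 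Thus $\left|\sum_{i,j} A_{ij}\tr(B_iC_j)\right|\le\g_2^*(A)$, and taking the supremum over $k$ and over all contractions yields $\G^*(\Phi_A)\le\g_2^*(A)$.

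The genuinely nontrivial input is the Clifford-algebra construction, which is imported from the literature; the remainder is bookkeeping. The two points that require care in the write-up are the normalization of the trace --- chosen precisely so that operator-norm contractions correspond to vectors of Hilbert--Schmidt norm at most one, which is also what makes the identity $\tr(B_iC_j)=\ip{\xi_i}{\eta_j}$ of the quoted construction consistent --- and the elementary fact that the $\g_2^*$-form attains the same supremum over the product of closed unit balls as over the product of unit spheres. I do not expect any obstacle beyond this.
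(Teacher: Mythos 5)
Your proof is correct and takes essentially the same route as the paper: the block-diagonal reformulation of $\G^*(\Phi_A)$ as $\sup|\sum_{i,j}A_{ij}\tr(B_iC_j)|$ over contractions, the Clifford-algebra construction for $\G^*(\Phi_A)\geq \g_2^*(A)$, and the Hilbert--Schmidt embedding with the ball-versus-sphere observation supplying the reverse inequality that the paper leaves implicit. Your insistence on the normalized trace on the $M_k$ factor is the right reading (with the unnormalized trace the supremum over $k$ would diverge, e.g.\ taking $B_i=C_j=I_k$), and it is exactly the normalization under which the quoted Tsirelson construction yields $\tr(B_iC_j)=\ip{\xi_i}{\eta_j}$.
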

    
    Work of Watrous \cite{Watrous2009, Watrous2013} shows that for $\Phi: M_n\to M_n$ the cb-norm, which is \[\|\Phi\|_{cb} :=  \sup_k\|\Phi\otimes \id_{M_k}\|_{S,\infty\to S,\infty} =\|\Phi\otimes \id_{M_n}\|_{S,\infty\to S,\infty},\] and the ``diamond'' norm, which is \[\|\Phi\|_{\diamond} := \sup_k\|\Phi\otimes\id_{M_k}\|_{S,1\to S,1} = \|\Phi\otimes\id_{M_n}\|_{S,1\to S,1},\] are both \emph{efficiently computable} by semidefinite programs up to arbitrary precision in polynomial time as described in \cite[Theorem 2.4]{Watrous2009}. This relies on the norms stabilizing after tensoring at rank $n$ (more generally, this requires stabilization at rank $p(n)$ for some polynomial). Since we have only shown that $\G^*(\Phi)$ stabilizes (if it does) above rank $2^n$ in only the real case, whether $\G^*$ can be computed efficiently by a semidefinite program remains open.
\end{remark}

\begin{question}
   Is there a uniform constant $K$ so that for all $n$ and all $\Phi: M_n\to M_n$ linear we have that
   \[\G^*(\Phi)\leq K\|\Phi\|_{S,\infty\to S,1}?\]
\end{question}

\begin{remark}
    Let $\cc B_n$ be the operator norm unit ball in $M_n$. Let $\cc C_n\in M_n\otimes M_n$ be the convex hull of the the set $\{x\otimes y : x,y\in \cc B_n\}$. It is clear that $\cc C_n\subset \cc B_{n^2}$. The results in this section can be proved in the following way. We observe that $\|\Phi\|_{CS,1} = \sup_{X\in \cc C_n} |\ip{\ch(\Phi)}{X}|$ and $\|\Phi\|_{CS,1} = \sup_{Y\in \cc B_{n^2}} |\ip{\ch(\Phi)}{Y}|$. Therefore, while $\cc B_{n^2}$ is the most natural semidefinite relaxation of the set $\cc C_n$, the noncommutative Grothendieck inequality suggests that it is seemingly far from an optimal one. In fact, to avoid the pitfall given in Remark \ref{rmk:cs1-bad} any ``good'' relaxation needs to have low-complexity intersection with the real  diagonal matrices in $M_{n^2}$ in the sense of having relatively few extreme points. This eliminates considering sets such as the set of all contractions in $M_n\otimes M_n$ whose partial transpose is again a contraction.

\end{remark}

For a family of convex sets $\cc D_n$ with $\cc C_n\subset \cc D_n$ and $\Phi:M_n\to M_n$ linear, we define
\[\|\Phi\|_{C,\cc D} := \sup_{X\in \cc D_n} |\ip{\ch(\Phi)}{X}|.\]

Another natural relaxation of $\cc C_n$ is to consider the convex set $\cc H_n$ of sums of the form
\[\sum_{ij} x_{ij}\, A_i\otimes B_j\]
where $x = (x_{ij})$ is a positive semidefinite matrix of trace at most one and $A_i,B_j\in \cc B_n$. Let $\cc H_n^\circ$ be the polar of $\cc H_n$. Note that since $\cc B_n^\circ = \cc B_n$ we have that $\cc C_n\subset \cc H_n^\circ$ as well.

\begin{question}
   Is there an intrinsic characterization of $\cc H_n^\circ$? Is $\cc H_n^\circ\subset \cc B_{n^2}$?
\end{question}

\begin{question}
   Are $\|\Phi\|_{C,\cc H}$ and $\|\Phi\|_{C,\cc H^\circ}$ computable by semidefinite programs?
\end{question}

\section*{Acknowledgments}

The authors were supported by NSF grants DMS-1600857 and DMS-2055155.

\bibliographystyle{amsalpha}
\bibliography{bibliography}

\end{document}